\def\@tocline#1#2#3#4#5#6#7{\relax
  \ifnum #1>\c@tocdepth 
  \else
    \par \addpenalty\@secpenalty\addvspace{#2}%
    \begingroup \hyphenpenalty\@M
    \@ifempty{#4}{%
      \@tempdima\csname r@tocindent\number#1\endcsname\relax
    }{%
      \@tempdima#4\relax
    }%
    \parindent\z@ \leftskip#3\relax \advance\leftskip\@tempdima\relax
    \rightskip\@pnumwidth plus4em \parfillskip-\@pnumwidth
    #5\leavevmode\hskip-\@tempdima
      \ifcase #1
       \or\or \hskip 2em \or \hskip 2em \else \hskip 3em \fi%
      #6\nobreak\relax
    \dotfill\hbox to\@pnumwidth{\@tocpagenum{#7}}\par
    \nobreak
    \endgroup
  \fi}
\newtheorem{theorem}{Theorem}[section]
\newtheorem{lemma}[theorem]{Lemma}
\newtheorem{corollary}[theorem]{Corollary}
\newtheorem{remark}[theorem]{Remark}
\crefname{section}{Sect.}{section}
\numberwithin{equation}{section}
\DeclareMathOperator{\Ad}{Ad}
\DeclareMathOperator{\im}{Im}
\DeclareMathOperator{\re}{Re}
\DeclareMathOperator{\supp}{supp}
\DeclareMathOperator{\sign}{sign}
\DeclareMathOperator{\const}{const.}
\begin{document}
\title[Schr\"odinger equation on non-compact symmetric spaces]
{Schr\"odinger equation on non-compact symmetric spaces}

\author
{Jean-Philippe Anker, Stefano Meda, Vittoria Pierfelice,\\
Maria Vallarino and Hong-Wei Zhang}

\begin{abstract}
We establish sharp-in-time kernel and dispersive estimates 
    for the Schrödinger equation on non-compact Riemannian symmetric spaces of any rank. 
    Due to the particular geometry at infinity and the Kunze-Stein phenomenon, these properties are more pronounced in large time 
    and enable us to prove the global-in-time Strichartz inequality for a larger
    family of admissible couples than in the Euclidean case.
    Consequently, we obtain the global well-posedness for the corresponding semilinear equation 
    with lower regularity data and some scattering properties for small powers
    which are known to fail in the Euclidean setting.
    The crucial kernel estimates are achieved by combining the stationary phase method based on a subtle barycentric decomposition, a subordination formula of the Schrödinger group to the wave propagator and an improved Hadamard parametrix.
\end{abstract}

\makeatletter
\@namedef{subjclassname@2020}{\textnormal{2020} \it{Mathematics Subject Classification}}
\makeatother
\subjclass[2020]
{22E30, 35J10, 35P25, 43A85, 43A90}

\keywords{
non-compact symmetric space,
semilinear Schr\"odinger equation, 
pointwise kernel estimates,
dispersive property,
Strichartz inequality, 
global well-posedness}

\maketitle
\tableofcontents

\section{Introduction}\label{Section.1 Intro}
\subsection{Strichartz inequality}
The Strichartz inequality, which has proved to be an efficient tool 
    in the study of the nonlinear Schrödinger equation,
    has been extensively investigated over the past five decades. 
    Let us start by recalling some well-known results in the Euclidean setting.
    Consider the free Schrödinger equation
    \begin{align}\label{S1 Schrodinger R}
    \begin{cases}
        i\partial_{t}u(t,x)\,+\,\Delta_{x}u(t,x)\,=\,0,
        \qquad\,t\in\mathbb{R},\,\,\,x\in\mathbb{R}^{d},\\[5pt]
        u(0,x)=f(x),
    \end{cases}
    \end{align}
    whose solution is given by the convolution of the initial data with the kernel of the Schrödinger
    propagator:
    \begin{align}\label{S1 convolution R}
        u(t,\,.\,)\,=\,
        e^{it\Delta}\,f\,=\,
        f*\big\lbrace{(4\pi{i}t)^{-\frac{d}{2}}\,e^{i\frac{|\,.\,|^{2}}{4t}}}\big\rbrace.
    \end{align}
    The Strichartz inequality refers to an a priori estimate of the solution to \eqref{S1 Schrodinger R}:
    in dimension $d\ge2$, if a couple $(p,q)$ is \textit{admissible}, in the sense that
    \begin{align}\label{S1 admissible R}
        \frac{2}{p}\,+\frac{d}{q}\,=\,\frac{d}{2}
        \qquad\textnormal{with}\,\,\,p,q\,\ge\,2\,\,\,
        \textnormal{and}\,\,\,(p,q,d)\,\neq\,(2,\infty,2),
    \end{align}
    then there exists a constant $C>0$ such that global solutions to \eqref{S1 Schrodinger R} satisfy
    \begin{align}\label{S1 Strichartz R}
    \|u\|_{L^{p}(\mathbb{R};\,L^{q}(\mathbb{R}^{d}))}\,
    \le\,C\,\|f\|_{L^{2}(\mathbb{R}^{d})}.
    \end{align}
Such an inequality means that, endowed with suitable space-time Lebesgue norms, 
    the solution to the Schrödinger equation can be estimated in terms of the $L^{2}$ norm of the initial data. 
    This type of estimate appeared in the pioneering 
    works \cite{Tom1975,Seg1976,Str1977} around 1976 and was fully developed after Keel-Tao 
    completed the proof for the endpoint in 1998, see \cite{KeTa1998}. 
    Over these twenty years, there have been  many remarkable contributions to this problem, 
    and we refer to \cite{GiVe1995,Caz2003,Tao2006} for more details.   

A related fundamental problem is to figure out how the underlying geometry affects the Strichartz inequality. 
    This helps us to understand the behavior of solutions to the partial differential equations in the non-Euclidean background.
    Let $\mathcal{M}$ be a Riemannian manifold of dimension $d\ge2$ and $\Delta$ be the 
    Laplace-Beltrami operator on $\mathcal{M}$. The question is, for which $(p,q)$ and $s\ge0$,
    the following inequality holds:
    \begin{align}\label{S1 Strichartz M}
    \|e^{it\Delta}f\|_{L^{p}(I;\,L^{q}(\mathcal{M}))}\,
    \le\,C\,\|f\|_{H^{s}(\mathcal{M})}.
    \end{align}
    We say that \eqref{S1 Strichartz M} holds global-in-time if $I=\mathbb{R}$.
    Here $H^{s}$ denotes the standard $L^{2}$ Sobolev spaces. 
    If \eqref{S1 Strichartz M} is available only for some $s>0$,
    we say that the Strichartz inequality holds \textit{with loss of $s$ derivatives}. 
    The inequality \eqref{S1 Strichartz R} shows that, when $\mathcal{M}=\mathbb{R}^{d}$,
    the global-in-time Strichartz inequality holds without loss
    for all $(p,q)$ satisfying the admissible condition \eqref{S1 admissible R}.
    However, this is not always the case on manifolds.

Apart from some \textit{non-trapping} manifolds
(see, for instance, \cite{StTa2002,HTW2005,BoTz2007}),
the Strichartz inequality is more or less understood 
on \textit{compact} manifolds.
In general, \eqref{S1 Strichartz M} cannot hold 
global-in-time in the compact case. 
    Bourgain proved that, on the flat torus $\mathcal{M}=\mathbb{T}^{d}$, 
    if a couple $(p,q)$ satisfies the admissible condition \eqref{S1 admissible R}, 
    then \eqref{S1 Strichartz M} holds for $I=\mathbb{T}$ and $s>\frac{d}{4}-\frac{1}{2}$,
    see \cite{Bou1993a,Bou1993b}.  Later, Burq, Gérard and Tzvetkov \cite{BGT2004} showed that,
    on arbitrary compact Riemannian manifolds without boundary, \eqref{S1 Strichartz M} holds 
    with loss of $s=1/p$ derivatives for any finite time interval $I$ and $(p,q)$
    satisfying \eqref{S1 admissible R}. This result is sharp in some particular cases,
    such as on a three-dimensional sphere, where a $1/2$ loss must occur in \eqref{S1 Strichartz M}.

In this paper we consider \textit{non-compact} Riemannian symmetric spaces 
    of \textit{any rank}, which form an important class of non-positively curved Riemannian manifolds. Due to their exponential volume growth at infinity and the validity of the Kunze-Stein phenomenon,
    one expects stronger dispersive phenomena than in the Euclidean setting,
    hence better Strichartz inequality, well-posedness and scattering results.
    This was indeed brought to light in the study of the Schrödinger equation on 
    real hyperbolic spaces, which are the simplest symmetric spaces of non-compact type 
    and \textit{rank one}, see for instance \cite{Pie2006,Ban2007,BCS2008,AnPi2009,IoSt2009}.
    See also \cite{APV2011} for similar results on Damek-Ricci spaces 
    (a class of harmonic manifolds which includes all non-compact symmetric spaces of rank one).
    Extending these rank one results to higher rank is a natural but challenging problem
    since the Schrödinger kernel is sensitive to the geometry of the underlying manifolds,
    and the spherical Fourier analysis is well-known to be much more intricate on the higher rank
    symmetric spaces. In the next subsections, we will explain in detail the difficulties involved 
    and share some new ideas to overcome them. 
    Roughly saying, we establish in the present paper a stronger global-in-time Strichartz inequality,
    in the sense that the related family of admissible pairs 
    is \textit{significantly larger} than the best possible in the Euclidean setting. 
    This is achieved by a subtle analysis of the Schrödinger kernel.
Once the Schrödinger equation is understood on globally symmetric spaces,
    studying it on locally symmetric spaces is a further natural problem.
    See \cite{BGH2010,FMM2018,Wan2019} for some first results in rank one.
    
\subsection{Statements of main results}
We adopt the standard $TT^{*}$ duality argument to establish the Strichartz inequality, see \cite{Kat1987,GiVe1995}. This argument relies on the dispersive estimates of the Schrödinger propagator, which can be easily obtained in $\mathbb{R}^{d}$ since the corresponding convolution kernel is explicitly defined. However, such a fundamental piece of information is lacking on a general manifold. Our primary task is to find sharp pointwise estimates of the Schrödinger kernel. 

Consider a non-compact symmetric space $\mathbb{X}=G/K$ of rank $\ell$, where $G$ and $K$ are suitable Lie groups. Let $d\ge2$ and $D\ge3$ be its manifold dimension and dimension at infinity (see \cref{Section.2 Prelim} for more details about these notations). We denote by $\Delta$ the Laplace-Beltrami operator on $\mathbb{X}$ and consider the free Schrödinger equation
    \begin{align}\label{S1 Schrodinger X}
    \begin{cases}
        i\partial_{t}u(t,x)\,+\,\Delta_{x}u(t,x)\,=\,0,
        \qquad\,t\in\mathbb{R},\,\,\,x\in\mathbb{X},\\[5pt]
        u(0,x)=f(x),
    \end{cases}
    \end{align}
whose solution is given by
\begin{align*}
    u(t,x)\,
    =\,e^{it\Delta}f(x)\,
    =\,f*s_{t}(x)\,
    =\,\int_{\mathbb{X}}\,dy\,f(y)\,s_{t}(y^{-1}x).
\end{align*}
Here $s_{t}$ is the bi-$K$-invariant convolution kernel of the Schrödinger propagator $e^{it\Delta}$. As mentioned above, except for a few particular cases, for instance when $G$ is complex (see \cite{Gan1968}), the kernel $s_{t}$ has no explicit expression as in \eqref{S1 convolution R}. Moreover, the expression obtained by using the inverse Abel transform on real hyperbolic spaces (see \cite[p.1859]{AnPi2009}) is also unavailable in higher rank. Our main result is the the following sharp-in-time kernel estimates on non-compact symmetric spaces of any rank.

\begin{theorem}
[Pointwise kernel estimates]\label{S1.Main Thm Kernel}
There exist \,$C\!>\!0$ and \,$N\!>\!0$
such that the fol\-low\-ing estimates hold,
for all $t\in\mathbb{R}^{*}$ and $x\in\mathbb{X}:$
\begin{align}\label{S1 kernel estimates}
    |s_{t}(x)|\, 
    \le\,C\,(1+|x^{+}|)^{N}\,
        e^{-\langle\rho,\,x^{+}\rangle}\,
    \begin{cases}
        |t|^{-\frac{d}{2}} 
        &\textnormal{if \,}0<|t|<1,\\[5pt]
        |t|^{-\frac{D}{2}} 
        &\textnormal{if \,}|t|\ge1,
    \end{cases}
\end{align}
where $x^{+}$ denotes the radial component of $x$ in the Cartan decomposition and $\rho$ is the half sum of all positive roots.
\end{theorem}

\begin{remark}
Notice that $\langle{\rho,x^{+}}\rangle>0$ as $\rho$ and $x^{+}$ are two $\ell$-dimensional vectors in the so-called positive Weyl chamber, see \cref{Section.2 Prelim} for more details. Hence the large polynomial factor $(1+|x^{+}|)^{N}$ is harmless for proving the dispersive estimates because of the exponential factor $e^{-\langle\rho,x^{+}\rangle}$.
\end{remark}

In contrast to the Euclidean setting, the kernel $s_{t}$ here (expressed as \eqref{S3 Inverse kernel} according to the inverse spherical Fourier transform) behaves differently for small and large times and satisfies no rescaling. Let us emphasize that the existing methods in rank one fail to produce desired estimates since the \textit{Plancherel density} $|\mathbf{c}(\lambda)|^{-2}$ occurring in \eqref{S3 Inverse kernel} is not always a differential symbol in general, hence the standard stationary phase method fails. This is in fact a well-known difficulty in the study of higher rank spherical harmonic analysis, see for instance \cite{BOS1995}. As for the recent works about the wave equation on symmetric spaces \cite{AnZh2020,Zha2021}, the Schrödinger propagator does not enjoy the finite propagation speed as the wave propagator, and a more involved analysis is required. However, by borrowing some ideas from previous works and by combining them cleverly, we can obtain \cref{S1.Main Thm Kernel} on general symmetric spaces of non-compact type, see \cref{Section.3 Kernel}.

Once we establish these key pointwise kernel estimates, the dispersive properties of $e^{it\Delta}$ follow from the Kunze-Stein phenomenon (instead of Young's convolution inequality) and interpolation. Then we deduce the Strichartz inequality by using the $TT^{*}$ argument. The proofs are not so different from the ones on real hyperbolic spaces, but we still include the details in \cref{Section.4 Dispersive Strichartz} for the reader's convenience.

\begin{theorem}
[Dispersive properties]\label{S1.Main Thm Dispersive}
Let $2\le{q},\widetilde{q}<+\infty$.
Then there exists a constant $C>0$ such that 
following estimates hold for all $t\in\mathbb{R}^{*}:$
\begin{align}\label{S1.Main Dispersive properties}
    \|e^{it\Delta}\|_{
    L^{\widetilde{q}'}(\mathbb{X})\,
    \rightarrow\,L^{q}(\mathbb{X})}
    \le C
    \begin{cases}
        |t|^{-\max(\frac{1}{2}-\frac{1}{q},
        \frac{1}{2}-\frac{1}{\widetilde{q}})\,d} 
        &\textnormal{if \,}0<|t|<1,\\[5pt]
        |t|^{-\frac{D}{2}} 
        &\textnormal{if \,}|t|\ge1,
    \end{cases}
    \end{align}
where $\widetilde{q}$ and $\widetilde{q}'$ are dual indices
in the sense that $\frac{1}{\widetilde{q}}+\frac{1}{\widetilde{q}'}=1$.
\end{theorem}

\begin{remark}
\cref{S1.Main Thm Dispersive} covers previously known results in rank $1$, where the dimension at infinity is $D=3$. Notice that the large time decay in \eqref{S1.Main Dispersive properties} becomes faster in higher rank, see \eqref{S2 Dimensions}. Notice also that the estimate \eqref{S1.Main Dispersive properties} is quite different from the one in the Euclidean setting where
\begin{align*}
    \|e^{it\Delta_{\mathbb{R}^{d}}}\|_{
    L^{q'}(\mathbb{R}^{d})\,
    \rightarrow\,L^{q}(\mathbb{R}^{d})}\,
    \le\,C\,|t|^{-(\frac{1}{2}-\frac{1}{q})d}
    \qquad\forall\,t\in\mathbb{R}^{*}.
\end{align*}
In particular, the large time decay in \eqref{S1.Main Dispersive properties} depends no more on the indices $q$ or $\widetilde{q}$. Such a particular phenomenon on non-compact symmetric spaces yields a stronger Strichartz inequality.
\end{remark}

\begin{theorem}
[Strichartz inequality]\label{S1.Main Thm Strichartz}
Let $(p,q)$ and $(\widetilde{p},\widetilde{q})$ be two admissible pairs 
corresponding to the triangle
\begin{align}\label{S1 admissible X}
    \Big\lbrace{
    \Big(\frac{1}{p},\frac{1}{q}\Big)
    \in\Big(0,\frac12\Big]\times\Big(0,\frac12\Big)
    \,\Big|\,
    \frac{2}{p}+\frac{d}{q}\ge\frac{d}{2}
    }\Big\rbrace
    \,\bigcup\,
    \Big\lbrace{
    \Big(0,\frac12\Big)
    }\Big\rbrace.
\end{align}
Then there exists a constant $C>0$ such that, for any bounded or unbounded $I\subseteq\mathbb{R}$,
the solution to the inhomogeneous Schrödinger equation
\begin{align}\label{S1 Schrodinger F}
    i\partial_{t}u(t,x)\,+\,\Delta_{x}u(t,x)\,
    =\,F(t,x),
    \qquad\,u(0,x)=f(x), 
\end{align}
satisfies
\begin{align}\label{S1 Strichart inequality}
    \|u\|_{L^{p}(I;L^{q}(\mathbb{X}))}\,
    \le\,
        C\,\big(
        \|f\|_{L^{2}(\mathbb{X})}
        \,+\,\|F\|_{L^{\widetilde{p}'}
        (I;L^{\widetilde{q}'}(\mathbb{X}))}
        \big).
\end{align}
\end{theorem}

\begin{figure}[b]
\centering
\begin{tikzpicture}[scale=0.47][line cap=round,line join=round,>=triangle 45,x=1.0cm,y=1.0cm]
\draw[->,color=black] (0.,0.) -- (8.805004895364174,0.);
\foreach \x in {,2.,4.,6.,8.,10.}
\draw[shift={(\x,0)},color=black] (0pt,-2pt);
\draw[color=black] (8,0.08808504628984362) node [anchor=south west] {\large $\frac{1}{p}$};
\draw[->,color=black] (0.,0.) -- (0.,8);
\foreach \y in {,2.,4.,6.,8.}
\draw[shift={(0,\y)},color=black] (-2pt,0pt);
\draw[color=black] (0.11010630786230378,8) node [anchor=west] {\large $\frac{1}{q}$};
\clip(-5,-3.5) rectangle (11.805004895364174,9.404473957611293);
\fill[line width=2.pt,color=red,fill=red,fill opacity=0.15000000596046448] (0.45855683542994374,5.928142080369191) -- (5.85410415683891,5.8847171522290775) -- (5.864960388873938,4.136863794589543) -- cycle;
\draw [line width=1.pt,dash pattern=on 5pt off 5pt,color=red] (0.,6.)-- (6.,6.);
\draw [line width=1.pt,color=red] (6.,6.)-- (6.,4.);
\draw [line width=1.pt,color=red] (6.,4.)-- (0.,6.);
\draw [line width=0.8pt] (6.,4.)-- (6.,0.);
\draw [->,line width=0.5pt,color=red] (1.8,-1.2) -- (2.891937454136701,4.811295161325333);
\begin{scriptsize}
\draw [fill=red] (0.,6.) circle (5pt);
\draw[color=black] (-0.5709441083587729,6) node {\large $\frac{1}{2}$};
\draw[color=black] (-1.5,4) node {\large $\frac{1}{2}- \frac{1}{d}$};
\draw[color=black] (-0.5,-0.2) node {\large $0$};
\draw[color=black] (6.2,-0.7) node {\large $\frac{1}{2}$};
\draw[color=red] (1.8,-2.5) node {\large $\frac{1}{p} = \frac{d}{2} \big(\frac{1}{2} - \frac{1}{q}\big)$};
\draw [color=red] (6.,6.) circle (5pt);
\draw [fill=red] (6.,4.) circle (5pt);
\end{scriptsize}
\end{tikzpicture}
\caption{Admissibility in dimension $d\ge3$.}
\label{S1 admissible figure}
\end{figure}
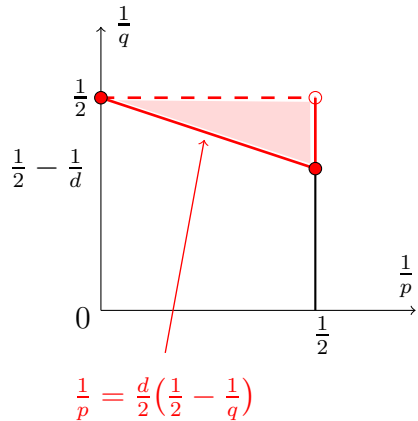

\begin{remark}
The inequality \eqref{S1 Strichart inequality} itself is analogous to the one in the Euclidean setting, but the admissible set \eqref{S1 admissible X} is much larger than \eqref{S1 admissible R}. The latter one corresponds only to the lower edge of the admissible triangle \eqref{S1 admissible X}, see \cref{S1 admissible figure}. This is due to the large-scale geometry of symmetric spaces, which yields better dispersive properties.
\end{remark}

\subsection{Well-posedness and scattering for the semilinear Schr\"odinger equation}\label{Section.5 NLS}
The \textit{fixed point argument}, which consists in finding a suitable contraction mapping defined on an appropriate Banach space, is nowadays a standard method to prove the \textit{well-posedness} of nonlinear partial differential equations. Due to the stronger Strichartz inequality proved in \cref{S1.Main Thm Strichartz}, we are able to obtain the following  better results on non-compact symmetric spaces. The proofs are adapted straightforwardly from the rank one case considered in \cite{AnPi2009,APV2011} and are therefore omitted.

Consider the semilinear Schr\"odinger equation:
    \begin{align}\tag{NLS}\label{S5 NLS}
    \begin{cases}
        i\partial_{t}u(t,x)\,+\,\Delta_{x}u(t,x)\,=\,F(u(t,x)),
        \qquad\,t\in\mathbb{R},\,\,\,x\in\mathbb{X},\\[5pt]
        u(0,x)=f(x).
    \end{cases}
    \end{align}
where $F$ is a \textit{power-like} nonlinearity 
of order $\gamma>1$ in the sense that 
\begin{align*}
    |F(u)|\,\le\,C\,|u|^{\gamma}
    \qquad\textnormal{and}\qquad
    |F(u)-F(v)|\,\le\,C\,
    (|u|^{\gamma-1}+|v|^{\gamma-1})\,|u-v|.
\end{align*}
Let $H^{1}(\mathbb{X})$ be the Sobolev space defined as the image of $L^{2}(\mathbb{X})$ under the operator $(-\Delta)^{-\frac12}$. We have the following well-posedness results for the semilinear Schrödinger equation \eqref{S5 NLS}:

\begin{itemize}[leftmargin=20pt]
    \item\vspace{5pt}
        If $1<\gamma\le1+\frac{4}{d}$, the Cauchy problem
        \eqref{S5 NLS} is globally well-posed for small
        $L^2$ data.
        
    \item\vspace{5pt}
        If $1<\gamma<1+\frac{4}{d}$, the Cauchy problem
        \eqref{S5 NLS} is locally well-posed for \textit{arbitrary}
        $L^2$ data. Moreover, if $F$ is in addition \textit{gauge invariant}, 
        namely, if $\im\lbrace{F(u)\bar{u}}\rbrace=0$,
        then the $L^2$ conservation of mass 
        implies the global well-posedness for arbitrary
        $L^2$ data in this subcritical case.
    
    \item\vspace{5pt}
        If $1<\gamma\le1+\frac{4}{d-2}$, the Cauchy problem
        \eqref{S5 NLS} is globally well-posed for small
        $H^1$ data.
 
    \item\vspace{5pt}
        If $1<\gamma<1+\frac{4}{d-2}$, the Cauchy problem
        \eqref{S5 NLS} is locally well-posed for \textit{arbitrary}
        $H^1$ data. Moreover, if $F$ is in addition \textit{defocusing},
        namely, if there exists a nonnegative $\mathcal{C}^{1}$ function
            $G$ such that $F(u)=G'(|u|^{2})u$,
        then the $H^1$ conservation of energy implies the 
        global well-posedness for arbitrary $H^1$ data 
        in this subcritical case.
\end{itemize}

Notice that these results are better than the known ones on Euclidean spaces.
For instance, global well-posedness for small $L^{2}$ initial data holds
for any exponent $1<\gamma\le1+\frac{4}{d}$ on $\mathbb{X}$,
while on $\mathbb{R}^{d}$ one must assume in addition gauge
invariance. 
However, under this condition, one can handle arbitrary $L^2$ 
data by using conservation laws.

The Strichartz inequality can also be used to prove \textit{scattering} results, which means that global solutions to the \textit{nonlinear} Schrödinger equation behave asymptotically as solutions to the \textit{linear} equation. Specifically, \cref{S1.Main Thm Strichartz} implies the following scattering results.

\begin{itemize}[leftmargin=20pt]
    \item
        If $1<\gamma\le1+\frac{4}{d}$, then the global
        solution $u(t,x)$ to the Cauchy problem \eqref{S5 NLS}
        corresponding to small $L^2$ data satisfies the following
        scattering property: there exists $u_{\pm}\in{L}^{2}(\mathbb{X})$
        such that
        \begin{align*}
           \|u(t,\cdot)-e^{it\Delta}u_{\pm}\|_{L^{2}(\mathbb{X})}
           \,\longrightarrow\,0
           \qquad\textnormal{as}\qquad
           t\,\longrightarrow\,\pm\infty.
        \end{align*}
    
    \item
        If $1<\gamma\le1+\frac{4}{d-2}$, then the global
        solution $u(t,x)$ to the Cauchy problem \eqref{S5 NLS}
        corresponding to small $H^1$ data satisfies the following
        scattering property: there exists $u_{\pm}\in{H}^{1}(\mathbb{X})$
        such that
        \begin{align*}
           \|u(t,\cdot)-e^{it\Delta}u_{\pm}\|_{H^{1}(\mathbb{X})}
           \,\longrightarrow\,0
           \qquad\textnormal{as}\qquad
           t\,\longrightarrow\,\pm\infty.
        \end{align*}
        This scattering property remains valid for \textit{any} $H^{1}$ initial data in the subcritical case $1<\gamma<1+\frac{4}{d-2}$ if the nonlinearity $F$ is assumed to be defocusing.
\end{itemize}

Notice that, on $\mathbb{R}^{d}$, these scattering properties are known to fail for small powers $\gamma\in(1,1+\frac{2}{d}]$. However, our stronger Strichartz inequality on non-compact symmetric spaces makes these properties possible in the full range.

\subsection{Layout}

After reviewing spherical Fourier analysis on non-compact
symmetric spaces in \cref{Section.2 Prelim}, 
we prove in \cref{Section.3 Kernel} our main result,
namely the pointwise kernel estimates.
More precisely, we start with the large time estimate, which is surprisingly straightforward, see
\cref{S3 subsec large}. In small time, we get the desired estimates by combining the stationary phase method based on the barycentric decomposition in \cref{S2 barycentric dichotomy}, the subordination formula \eqref{S3.2 Kernel (wave)}, and the improved Hadamard parametrix \eqref{S3.2 Parametrix}, see \cref{S3 subsec small}.
By adapting the method carried out in rank one, we deduce 
in \cref{Section.4 Dispersive Strichartz} our stronger
dispersive and Strichartz inequalities.


\section{Preliminaries}\label{Section.2 Prelim}

In this section we review briefly harmonic analysis on Riemannian
symmetric spaces of non-compact type. We adopt the standard notation 
and refer to \cite{Hel1978, Hel2000,GaVa1988} for more details. 
Throughout this paper, the symbol $A\lesssim{B}$ between two positive expressions means that there is a constant $C>0$ such that $A\le CB$. The symbol $A\asymp B$ means that $A\lesssim B$ and $B\lesssim A$.

\subsection{Non-compact symmetric spaces}
Let $G$ be a semisimple Lie group, connected, non-compact,
with finite center, and $K$ be a maximal compact subgroup
of $G$. The homogeneous space $\mathbb{X}=G/K$ is 
a Riemannian symmetric space of non-compact type.
Let $\mathfrak{g}=\mathfrak{k}\oplus\mathfrak{p}$ be
the Cartan decomposition of the Lie algebra of $G$,
the Killing form of $\mathfrak{g}$ induces a $K$-invariant inner product 
$\langle.\,,\,.\rangle$
on $\mathfrak{p}$, hence a $G$-invariant
Riemannian metric on $\mathbb{X}$.
Fix a maximal abelian subspace $\mathfrak{a}$ in
$\mathfrak{p}$. The rank of $\mathbb{X}$ is the dimension
$\ell$ of $\mathfrak{a}$.
We identify $\mathfrak{a}$ with its dual $\mathfrak{a}^{*}$
by means of the inner product inherited from $\mathfrak{p}$.
Let $\Sigma\subset\mathfrak{a}$
be the root system of $( \mathfrak{g},\mathfrak{a})$ and
denote by $W$ the Weyl group associated to $\Sigma$. 
Once a positive Weyl chamber 
$\mathfrak{a}^{+}\subset\mathfrak{a}$ has been selected,
$\Sigma^{+}$ (resp. $\Sigma_{r}^{+}$ or $\Sigma_{s}^{+}$)
denotes the corresponding set of positive roots 
(resp. positive reduced roots or simple roots).
Let $d$ be the dimension of $\mathbb{X}$ and let $D$ be the
so-called dimension at infinity or pseudo-dimension of $\mathbb{X}$: 
\begin{align}\label{S2 Dimensions}
\textstyle
    d\,=\,
    \ell\,+\,\sum_{\alpha \in \Sigma^{+}}\,m_{\alpha}
    \quad\textnormal{and}\quad
    D\,=\,\ell\,+\,2|\Sigma_{r}^{+}|,
\end{align}
where $m_{\alpha}$ is the dimension of the positive 
root subspace $\mathfrak{g}_{\alpha}$.
Notice that these two dimensions behave quite differently.
For example, $D=3$ while $d\ge2$ is arbitrary in rank one,
$D=d$ if $G$ is complex,
and $D>d$ (actually $D=2d-\ell$) if $G$ is split.
\vspace{5pt}

Let $\mathfrak{n}$ be the nilpotent Lie subalgebra 
of $\mathfrak{g}$ associated to $\Sigma^{+}$ 
and let $N = \exp \mathfrak{n}$ be the corresponding 
Lie subgroup of $G$. We have the decompositions 
\begin{align*}
    \begin{cases}
        \,G\,=\,N\,(\exp\mathfrak{a})\,K 
        \quad&\textnormal{(Iwasawa)}, \\
        \,G\,=\,K\,(\exp\overline{\mathfrak{a}^{+}})\,K
        \quad&\textnormal{(Cartan)}.
    \end{cases}
\end{align*}
In the Cartan decomposition, the Haar measure on $G$ writes
\begin{align*}
    \int_{G}\,dx\,f(x)\,
    =\,\const\,\int_{K}\,dk_{1}\,
    \int_{\mathfrak{a}^{+}}\,dx^{+}\,\delta(x^{+})\, 
    \int_{K}\,dk_{2}\,f(k_{1}(\exp x^{+})k_{2})\,,
\end{align*}
with density
\begin{align}\label{S2 density delta}
    \delta(x^{+})\,
    =\,\prod_{\alpha\in\Sigma^{+}}\,
        (\sinh\langle{\alpha,x^{+}}\rangle)^{m_{\alpha}}\,
    \asymp\,
        \prod_{\alpha\in\Sigma^{+}}
        \Big\lbrace 
        \frac{\langle\alpha,x^{+}\rangle}
        {1+\langle\alpha,x^{+}\rangle}
        \Big\rbrace^{m_{\alpha}}\,
        e^{2\langle\rho,x^{+}\rangle}
    \quad\forall\,x^{+}\in\overline{\mathfrak{a}^{+}}. 
\end{align}
Here $\rho\in\mathfrak{a}^{+}$ denotes the half sum of all positive roots $\alpha\in\Sigma^{+}$ counted with their multiplicities $m_{\alpha}$:
\begin{align*}
    \rho\,
    =\,\frac{1}{2}\,\sum_{\alpha\in\Sigma^{+}} 
        \,m_{\alpha}\,\alpha.
\end{align*}

\subsection{Spherical Fourier analysis on symmetric spaces}
Let $\mathcal{S}(K \backslash{G}/K)$ be the Schwartz space 
of bi-$K$-invariant functions on $G$. 
The spherical Fourier transform $\mathcal{H}$ is defined by
\begin{align*}
    \mathcal{H}f(\lambda)\,
    =\,\int_{G}\,dx\,\varphi_{-\lambda}(x)\,f(x) 
    \quad\forall\,\lambda\in\mathfrak{a},\ 
    \forall\,f\in\mathcal{S}(K\backslash{G/K}),
\end{align*}
where $\varphi_{\lambda}\in\mathcal{C}^{\infty}
(K \backslash{G/K})$ denotes the spherical function 
of index $\lambda \in \mathfrak{a}$, 
which is a smooth bi-$K$-invariant eigenfunction 
of all invariant differential operators on $\mathbb{X}$,
in particular of the Laplace-Beltrami operator:
\begin{equation*}
    -\Delta\varphi_{\lambda}(x)\,
    =\,(|\lambda|^{2}+|\rho|^2)\,
    \varphi_{\lambda}(x).
\end{equation*}
The spherical functions have the following integral representation
\begin{align}\label{S2 Spherical Function}
    \varphi_{\lambda}(x) 
    = \int_{K}dk\ 
        e^{\langle{i\lambda+\rho,A(kx)}\rangle}
    \quad\forall\,
        \lambda\in{\mathfrak{a}},
\end{align}
where $A(kx)$ denotes the $\mathfrak{a}$-component 
in the Iwasawa decomposition of $kx$. 
They satisfy the basic estimate
\begin{align*}
    |\varphi_{\lambda}(x)| \,
    \le\,\varphi_{0}(x)
    \quad\forall\,\lambda\in\mathfrak{a},\
    \forall\,x\in\,G,
\end{align*}
where
\begin{align}\label{S2 phi0}
    \varphi_{0}(\exp{x^{+}})\,
    \asymp\,\Big\lbrace 
        \prod_{\alpha\in\Sigma_{r}^{+}} 
        (1+\langle\alpha,x^{+}\rangle)
        \Big\rbrace\,
        e^{-\langle\rho, x^{+}\rangle} 
        \quad\forall\,x\in G.
\end{align}
Denote by $\mathcal{S}(\mathfrak{a})^{W}$ the subspace 
of $W$-invariant functions in the Schwartz space
$\mathcal{S}(\mathfrak{a})$. Then $\mathcal{H}$ is an
isomorphism between $\mathcal{S}(K\backslash{G/K})$ 
and $\mathcal{S}(\mathfrak{a})^{W}$. 
The inverse spherical Fourier transform is given by
\begin{align}\label{S2 Inverse Fourier}
    f(x)\,
    =\,C_0\,\int_{\mathfrak{a}}\,d\lambda\,
        |\mathbf{c(\lambda)}|^{-2}\,
        \varphi_{\lambda}(x)\,
        \mathcal{H}f(\lambda) 
    \quad \forall\,x\in{G},\ 
        \forall\,f\in\mathcal{S}(\mathfrak{a})^{W},
\end{align}
where $C_0 > 0$ is a constant depending only 
on the geometry of $\mathbb{X}$.
By using the Gindikin-Karpelevič formula of 
the Harish-Chandra $\mathbf{c}$-function 
(see \cite{Hel2000} or \cite{GaVa1988}), 
we can write the Plancherel density as
\begin{align}\label{S2 Plancherel Density}
    |\mathbf{c}(\lambda)|^{-2}\,
    =\,\prod_{\alpha\in\Sigma_{r}^{+}}\,
        |\mathbf{c}_{\alpha}
        (\langle\alpha,\lambda\rangle)|^{-2},
\end{align}
with
\vspace{-4mm}
\begin{align*}
\textstyle
    c_{\alpha}(v)\,=\,\overbrace{
    \tfrac{\Gamma(\frac{\langle{\alpha,\rho}\rangle}
        {\langle{\alpha,\alpha}\rangle}
        +\frac{1}{2} m_{\alpha})}
        {\Gamma(\frac{\langle{\alpha,\rho}\rangle}
        {\langle{\alpha,\alpha}\rangle})}\,
    \tfrac{\Gamma(\frac{1}{2}
        \frac{\langle{\alpha,\rho}\rangle}
        {\langle{\alpha,\alpha}\rangle} 
        +\frac{1}{4} m _{\alpha} 
        + \frac{1}{2} m_{2\alpha})}
        {\Gamma(\frac{1}{2}
        \frac{\langle{\alpha,\rho}\rangle}
        {\langle {\alpha,\alpha}\rangle} 
        + \frac{1}{4} m_{\alpha})}}^{C_\alpha}\,
    \frac{\Gamma(iv)}
        {\Gamma(iv+ \frac{1}{2}m_{\alpha})}\,
    \frac{\Gamma(\frac{i}{2}v 
        + \frac{1}{4} m_{\alpha})}
        {\Gamma(\frac{i}{2}v 
        +\frac{1}{4} m_{\alpha} 
        + \frac{1}{2} m_{2\alpha})}.
\end{align*}
Notice that $|\mathbf{c}_{\alpha}|^{-2}$ is an inhomogeneous
differential symbol on $\mathbb{R}$ of order $m_{\alpha}+m_{2\alpha}$,
for every $\alpha\in\Sigma_{r}^{+}$. Hence $|\mathbf{c}(\lambda)|^{-2}$ 
is a product of one-dimensional symbols, but not a symbol on 
$\mathfrak{a}$ in general. 
It satisfies indeed
\begin{align}\label{S2 Density Estimates}
    \begin{cases}
        |\mathbf{c}(\lambda)|^{-2}\,
        \lesssim\,
        |\lambda|^{D-\ell} 
        & \textnormal{if \,} |\lambda|\le1,\\[5pt]
        |\nabla_{\mathfrak{a}}^{k}\mathbf{c}(\lambda)|^{-2}\,
        \lesssim\,
        |\lambda|^{d-\ell} 
        & \textnormal{if \,} |\lambda|\ge1,
    \end{cases}
\end{align}
for all $k\in\mathbb{N}$. The estimate away from the origin means that one cannot obtain any additional decay from general derivatives, hence the standard stationary phase method fails. To overcome this difficulty, we introduce the following barycentric decomposition.
\begin{theorem}\label{S2 barycentric dichotomy}
There exists a smooth partition of unity 
\begin{align}\label{S2 Partition}
    \sum_{w\in W}\,\sum_{1\le{j}\le\ell}
    \chi_{w.S_{j}}\,=\,1
    \qquad\textnormal{on}\qquad
    \mathfrak{a}\smallsetminus\lbrace0\rbrace
\end{align}
consisting of homogeneous symbols $\chi_{w.S_{j}}$ of order $0$, and some nonzero vectors $w.\Lambda_{j}\in\supp\chi_{w.S_{j}}$ such that for all $w\in{W}$ and $1\le{j}\le\ell$,
\begin{align}\label{S2 barycentric 1}
    |\langle{w.\Lambda_{j},\lambda}\rangle|\asymp|\lambda|
    \qquad\,\forall\,\lambda\in\supp\chi_{w.S_{j}}
\end{align}
and the following dichotomy holds: for every $\alpha\in\Sigma$, 
\begin{itemize}[leftmargin=20pt]
    \vspace{5pt}\item
        either $\langle\alpha,w.\Lambda_{j}\rangle=0$,
    \vspace{5pt}\item 
        or $|\langle\alpha,\lambda\rangle|\asymp|\lambda|$ for all $\lambda\in\supp\chi_{w.S_{j}}$.
\end{itemize}
\end{theorem}
This theorem allows us to split up the Cartan subspace $\mathfrak{a}$ into several subcones $\supp\chi_{w.S_{j}}$ by using some appropriate cut-off functions $\chi_{w.S_{j}}$. In each subcone, we can choose a vector $w.\Lambda_{j}$ such that if a root $\alpha$ is not orthogonal to $w.\Lambda_{j}$, then the inner product between $\alpha$ and any vector $\lambda$ in this subcone is comparable to $|\lambda|$, see \cite[Subsection 2.2]{AnZh2020} for more details. Thanks to this decomposition, the Plancherel density can be handled as if it were a differential symbol, 
provided that we differentiate it in $\supp\chi_{w.S_{j}}$ along the well-chosen vector $w.\Lambda_{j}$.

\begin{corollary}\label{S2 dichotomy coro}
For every $w\in{W}$, $1\le{j}\le\ell$ and $k\in\mathbb{N}$, we have
\begin{align}\label{S2 diff Plancherel}
    \partial_{w.\Lambda_{j}}^{k}|\mathbf{c}(\lambda)|^{-2}\,
    =\,\mathrm{O}(|\lambda|^{d-\ell-k})
    \qquad\,\forall\lambda\in\supp\chi_{w.S_{j}}.
\end{align}
\end{corollary}

\begin{proof}
Let $\alpha$ be an arbitrary root in $\Sigma$. If $\langle\alpha,w.\Lambda_{j}\rangle=0$, then all terms
\begin{align*}
    \partial_{w.\Lambda_{j}}^{k}\,
    |\mathbf{c}_{\alpha}
    (\langle\alpha,\lambda\rangle)|^{-2}
    \qquad\forall\,k\in\mathbb{N}^{*}
\end{align*}
vanish. Otherwise, we deduce from the previous dichotomy that 
\begin{align*}
    \big| \partial_{w.\Lambda_{j}}^{k}\,
    |\mathbf{c}_{\alpha}
    (\langle\alpha,\lambda\rangle)|^{-2} \big|\,
    \lesssim\,
        (1+|\langle\alpha,\lambda\rangle
        |)^{m_{\alpha}+m_{2\alpha}-k}\,
    \asymp\,
        (1+|\lambda|)^{m_{\alpha}+m_{2\alpha}-k}
\end{align*}
for all $\lambda\in\supp\chi_{w.S_{j}}$, since $|\mathbf{c}_{\alpha}|^{-2}$ is an inhomogeneous differential symbol of order $m_{\alpha}+m_{2\alpha}$. We conclude by using \eqref{S2 Dimensions} and \eqref{S2 Plancherel Density}.
\end{proof}

\section{Pointwise estimates of the Schr\"odinger kernel}
\label{Section.3 Kernel}

For simplicity, we consider in this section the shifted
Schr\"odinger propagator $e^{-it\mathbf{D}^{2}}$ with 
$\mathbf{D}=\sqrt{-\Delta-|\rho|^2}$, and denote  by 
$s_{t}$ its bi-$K$-invariant convolution kernel. 
By using the inverse formula of the spherical Fourier transform,
we have
\begin{align}\label{S3 Inverse kernel}
    s_{t}(x)\,
    =\,C_0\,\int_{\mathfrak{a}}\,d\lambda\,
        |\mathbf{c}(\lambda)|^{-2}\,
        \varphi_{\lambda}(x)\,
        e^{-it|\lambda|^{2}}
    \qquad\forall\,t\in\mathbb{R}^{*},\,
    \forall\,x\in\mathbb{X}.
\end{align}
As usual, such an oscillatory integral makes sense
by applying standard procedures 
(as a limit of convergent integrals and/or
after performing several integrals by parts).
We will study \eqref{S3 Inverse kernel} differently,
depending whether $|t|$ is large or small.
Let us begin with the easier case where $|t|$ is large.

\subsection{Large time kernel estimate}\label{S3 subsec large}
Assume that $|t|\ge1$. In this case, we establish the 
following pointwise kernel estimate, by using the 
standard stationary phase method based on the elementary
estimate \eqref{S2 Density Estimates}. 
\begin{theorem}\label{S3.1 Kernel1}
There exist an integer $N>\max\lbrace{d,D}\rbrace$ and a constant $C>0$ such that
\begin{align*}
    |s_{t}(x)|\,
    \le\,C\,
        |t|^{-\frac{D}{2}}\,
        {\textstyle
        \big(1+|x|\big)^{N}}\,
        \varphi_{0}(x)
        \qquad\forall\,|t|\ge1,\,
        \forall\,x\in\mathbb{X}.
\end{align*}
\end{theorem}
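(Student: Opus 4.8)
The plan is to analyze the oscillatory integral \eqref{S3 Inverse kernel} for $|t|\ge1$ by combining the standard one–dimensional stationary phase method with the barycentric decomposition from \cref{Subsection Decomposition}. First I would insert the partition of unity \eqref{S2 Partition} into \eqref{S3 Inverse kernel}, writing $s_t = \sum_{w\in W}\sum_{1\le j\le\ell} s_t^{w,j}$ where each piece carries the cut-off $\chi_{w.S_j}(\lambda)$ as an extra factor in the integrand. By $W$-invariance of $|\mathbf{c}(\lambda)|^{-2}$, $\varphi_\lambda$, and $|\lambda|^2$ it suffices to treat the pieces with $w=\mathrm{id}$, i.e. to estimate the integral over $\supp\chi_{S_j}$ for a fixed $j$. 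On this support, \cref{S2 Support w.Sj} tells us that every root $\alpha$ with $\langle\alpha,\Lambda_j\rangle\ne0$ satisfies $|\langle\alpha,\lambda\rangle|\asymp|\lambda|$, and moreover $|\langle\Lambda_j,\lambda\rangle|\asymp|\lambda|$; this is precisely what rescues us from the fact that $|\mathbf{c}(\lambda)|^{-2}$ is only a product of one–dimensional symbols and not a genuine symbol on $\mathfrak a$.

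Next I would change variables adapted to $\Lambda_j$: split $\lambda = r\,\omega$ with $r=\langle\Lambda_j,\lambda\rangle/|\Lambda_j|^2$-type radial coordinate along $\Lambda_j$ and the remaining $\ell-1$ variables transverse, or more simply introduce polar-type coordinates so that $|\lambda|^2$ has nondegenerate Hessian in the single radial direction. Using the integral representation \eqref{S2 Spherical Function} of $\varphi_\lambda$, we have $\varphi_\lambda(x)=\int_K dk\, e^{\langle i\lambda+\rho,A(kx)\rangle}$, so each $s_t^{\mathrm{id},j}(x)$ becomes $\int_K dk\, e^{\langle\rho,A(kx)\rangle}\int_{\mathfrak a} d\lambda\,\chi_{S_j}(\lambda)|\mathbf{c}(\lambda)|^{-2} e^{i\langle\lambda,A(kx)\rangle - it|\lambda|^2}$. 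For the inner $\lambda$-integral I would integrate by parts repeatedly in the transverse directions using the stationary phase / nonstationary phase machinery, exploiting that on $\supp\chi_{S_j}$ the amplitude $\chi_{S_j}|\mathbf{c}|^{-2}$ together with all its derivatives obeys the bound \eqref{S2 Density Estimates} (the product structure causes no trouble now because each factor $|\mathbf{c}_\alpha(\langle\alpha,\lambda\rangle)|^{-2}$ is evaluated at an argument $\asymp|\lambda|$ and is hence effectively a genuine symbol on this region), and then apply one–dimensional stationary phase in the radial direction where the phase $-t|\lambda|^2$ has a nondegenerate critical point contributing the factor $|t|^{-1/2}$ per dimension — giving $|t|^{-\ell/2}$ — while the remaining integration of the symbol of order $D-\ell$ (for $|\lambda|\le1$) against the gain from integration by parts produces the claimed power $|t|^{-D/2}$ overall. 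The polynomial factor $(1+|x|)^N$ with $N>\max\{d,D\}$ absorbs the finitely many derivatives of $\varphi_\lambda$ in $x$ (each $\partial$ on the phase brings down a factor $|A(kx)|\lesssim|x|$), and the factor $\int_K dk\, e^{\langle\rho,A(kx)\rangle}$ is comparable to $\varphi_0(x)$ up to a constant, which yields the stated $\varphi_0(x)$ on the right-hand side.

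The main obstacle I expect is the bookkeeping near $\lambda=0$ and near $|\lambda|=1$, where the two regimes in \eqref{S2 Density Estimates} meet and where the radial stationary phase must be reconciled with the vanishing of $|\mathbf{c}(\lambda)|^{-2}$ like $\prod_\alpha\langle\alpha,\lambda\rangle^2$ at the origin. Near the origin one must check that the low-frequency contribution is integrable and controlled by $|t|^{-D/2}$ rather than $|t|^{-d/2}$ — this is where the exponent $D$ (and not $d$) appears in the large-time estimate, and it is exactly the payoff of using the ``$|\lambda|\le1$'' branch of \eqref{S2 Density Estimates} together with the order-$(D-\ell)$ homogeneity there. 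A secondary technical point is justifying that the cut-off $\chi_{S_j}$, which is only a homogeneous symbol of order $0$ and hence singular at $\lambda=0$, does not spoil the low-frequency analysis; one handles this by an additional smooth dyadic decomposition in $|\lambda|$, treating the region $|\lambda|\lesssim|t|^{-1/2}$ trivially (its measure times the sup of the amplitude already gives the bound) and applying stationary phase only for $|\lambda|\gtrsim|t|^{-1/2}$. Once these regions are separated, each contribution is routine, and summing the finitely many pieces $w,j$ closes the argument.
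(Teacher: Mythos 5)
Your treatment of the low-frequency region is fine and coincides with the paper's: on $\{|\lambda|\lesssim|t|^{-1/2}\}$ the measure-times-sup (or the integral of the density) gives $|t|^{-D/2}$, and this is indeed where $D$ enters. The genuine gap is in the high-frequency, oscillatory part, and it comes from a misreading of what the barycentric decomposition buys. \cref{S2 Support w.Sj} only guarantees $|\langle\alpha,\lambda\rangle|\asymp|\lambda|$ on $\supp\chi_{w.S_{j}}$ for those roots with $\langle\alpha,w.\Lambda_{j}\rangle\neq0$; roots orthogonal to $w.\Lambda_{j}$ (which exist already in rank $2$, e.g.\ $\alpha_{2}\perp\Lambda_{1}$ in $A_{2}$) can have $\langle\alpha,\lambda\rangle$ arbitrarily small on that support. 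Consequently your claim that on $\supp\chi_{S_{j}}$ ``each factor $|\mathbf{c}_\alpha(\langle\alpha,\lambda\rangle)|^{-2}$ is evaluated at an argument $\asymp|\lambda|$ and is hence effectively a genuine symbol'' is false for transverse derivatives: the decomposition makes the Plancherel density behave like a symbol only when differentiated along the single direction $w.\Lambda_{j}$, which is precisely why the paper (for small time) integrates by parts only along $\partial_{w.\Lambda_{j}}$. Your plan, however, rests on ``integrating by parts repeatedly in the transverse directions,'' which would differentiate exactly the factors $|\mathbf{c}_\alpha(\langle\alpha,\cdot\rangle)|^{-2}$ with $\alpha\perp w.\Lambda_{j}$ at small arguments, where no symbol gain is available. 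Moreover the phase $\langle\lambda,A\rangle-t|\lambda|^{2}$ is \emph{not} non-stationary transversally: in polar-type coordinates the angular part $r\langle\omega,A\rangle$ is critical at $\omega\parallel A$, so the promised transverse integrations by parts do not apply as stated; and the step ``one-dimensional stationary phase in the radial direction \dots\ $|t|^{-1/2}$ per dimension --- giving $|t|^{-\ell/2}$'' is internally inconsistent (a one-dimensional stationary phase yields a single factor $|t|^{-1/2}$), while the passage from $|t|^{-\ell/2}$ to $|t|^{-D/2}$ via the value of the density at the critical point $\lambda_{0}=A/(2t)$ is asserted rather than carried out.

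For comparison, the paper's proof of this large-time estimate is much simpler and needs neither the barycentric decomposition nor any symbol property: it splits at $|\lambda|\asymp|t|^{-1/2}$, uses the trivial bound on the ball, and on the complement performs $N$ integrations by parts with the radial identity \eqref{S3.1 Phase}, using only that \emph{all} derivatives of $|\mathbf{c}(\lambda)|^{-2}$ obey the crude size bounds \eqref{S2 Density Estimates} (no gain per derivative is required, because the factor $(1+|x|)^{N}$ and the condition $|t|\ge1$ absorb the losses, and one only needs $N>d$, $N\ge D/2$). If you want to salvage your route, you must either restrict all integrations by parts to the direction $w.\Lambda_{j}$ (as in the paper's small-time argument) or abandon the symbol claim and argue, as the paper does, with undifferentiated size bounds on the derivatives of the density.
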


\begin{proof}
By using the integral expression \eqref{S2 Spherical Function}
of the spherical function, we write
\begin{align}\label{S3.1 Kernel}
    s_{t}(x)\,
    =\,C_0\,
        \int_{K}\,dk\,e^{\langle{\rho,A(kx)}\rangle}\,
        \underbrace{
        \int_{\mathfrak{a}}\,d\lambda\,
        |\mathbf{c}(\lambda)|^{-2}\,
        e^{-it|\lambda|^{2}}\,
        e^{i\langle{\lambda,A(kx)}\rangle}
        }_{I(t,A(kx))}
\end{align}
where $A(kx)$ denotes the $\mathfrak{a}$-component 
in the Iwasawa decomposition of $kx$, which
satisfies $|A(kx)|\le|x|$ and which we abbreviate by
$A$ in the sequel.
\cref{S3.1 Kernel1} will follow from the estimate
\begin{align}\label{S3.1 Estimate I}
    |I(t,A)|\,
    \lesssim\,|t|^{-\frac{D}{2}}\,
    {\textstyle
        \big(1+|A|\big)^{N}}.
\end{align}
Let us split up
\begin{align*}
    I(t,A)\,
    =\,I_{0}(t,A)\,+\,I_{\infty}(t,A)\,
    =\,\int_{\mathfrak{a}}\,d\lambda\,
        \chi_{t}^{0}(\lambda)\,\dots\,
        +\,
        \int_{\mathfrak{a}}\,d\lambda\,
        \chi_{t}^{\infty}(\lambda)\,\dots
\end{align*}
where $\chi_{t}^{0}(\lambda)=\chi(\sqrt{|t|}|\lambda|)$
is a radial cut-off function such that
$\supp\chi_{t}^{0}\subset\,B(0,2|t|^{-\frac12})$,
$\chi_{t}^{0}=1$ on $B(0,|t|^{-\frac12})$
and $\chi_{t}^{\infty}=1-\chi_{t}^{0}$.
On the one hand, by using \eqref{S2 Density Estimates},
we easily estimate
\begin{align}\label{S3.1 Estimate I0}
    |I_{0}(t,A)|\,
    \lesssim\,
        \int_{|\lambda|\lesssim\,|t|^{-\frac12}}\,d\lambda\,
        |\mathbf{c}(\lambda)|^{-2}\,
    \lesssim\,
        |t|^{-\frac{D}{2}}.
\end{align}
On the other hand, after performing $N$ integrations by
parts based on
\begin{align}\label{S3.1 Phase}
\textstyle
    e^{-it|\lambda|^2}\,
    =\,-\frac{1}{2it}\,
        \sum_{j=1}^{\ell}\,\frac{\lambda_{j}}{|\lambda|^2}\,
        \frac{\partial}{\partial\lambda_{j}}\,
        e^{-it|\lambda|^2},
\end{align}
we obtain
\begin{align}\label{S3.1 I infinity}
    I_{\infty}(t,A)\,
    =\,(2it)^{-N}\,
        \int_{\mathfrak{a}}\,d\lambda\,
        e^{-it|\lambda|^{2}}\,
        \Big\lbrace{\textstyle
        \sum_{j=1}^{\ell}\,
        \frac{\partial}{\partial\lambda_{j}}\,
        \circ\,\frac{\lambda_{j}}{|\lambda|^2}
        }\Big\rbrace^{N}
        \Big\lbrace{
        \chi_{t}^{\infty}(\lambda)\,
        |\mathbf{c}(\lambda)|^{-2}\,
        e^{i\langle{\lambda,A}\rangle}
        }\Big\rbrace.
\end{align}
Assume that 
\begin{itemize}[leftmargin=20pt]
    \item 
        $N_{0}$ derivatives are applied to the cut-off
        function $\chi_{t}^{0}(\lambda)$,
        which produces $\mathrm{O}(|t|^{\frac{N_{0}}{2}})$,
        
    \item\vspace{5pt}
        $N_{1}$ derivatives are applied to the factors 
        $\frac{\lambda_{j}}{|\lambda|^2}$, which produces
        $\mathrm{O}(|\lambda|^{-N-N_{1}})$,

    \item\vspace{5pt}
        $N_{2}$ derivatives are applied to the Plancherel density $|\mathbf{c}(\lambda)|^{-2}$,
        which is not a symbol in general and which produces
        \begin{align*}
            \begin{cases}
            \,\mathrm{O}(|\lambda|^{D-\ell}) 
            &\textnormal{if \,}|\lambda|\le1,\\[5pt]
            \,\mathrm{O}(|\lambda|^{d-\ell})
            &\textnormal{if \,}|\lambda|\ge1,
            \end{cases}
        \end{align*}
    \item
        $N_{3}$ derivatives are applied to the exponential factor $e^{i\langle{\lambda,A}\rangle}$,
        which produces $\mathrm{O}(|A|^{N_3})$,
        \vspace{5pt}
\end{itemize}
with $N_{0}+N_{1}+N_{2}+N_{3}=N$.
If some derivatives hit the cut-off function $\chi_{t}^{0}(\lambda)$,
i.e., if $N_{0}\ge1$, then the integral reduces to a spherical shell where
$|\lambda|\asymp|t|^{-\frac12}$, and the contribution 
to \eqref{S3.1 I infinity} is estimated by
\begin{align}\label{S3.1 Estimate N0>1}
    |t|^{-\frac{N}{2}}\,
    |t|^{\frac{N_{0}}{2}}\,|t|^{\frac{N_{1}}{2}}\,
    |t|^{-\frac{D}{2}}\,|A|^{N_{3}}\,
    \lesssim\,
     |t|^{-\frac{D}{2}}\,{\textstyle\big(1+|A|\big)^{N}},
\end{align}
since $|t|\ge1$.
If $N_{0}=0$, then
\begin{align}\label{S3.1 Estimate N0=0}
    |I_{\infty}(t,A)|\,
    &\lesssim\,|t|^{-N}\,
    \int_{|\lambda|\gtrsim|t|^{-\frac12}}\,d\lambda\,
    \big|\nabla_{\lambda}^{N_{2}}\,
    |\mathbf{c}(\lambda)|^{-2}\big|\,
    |\lambda|^{-N-N_{1}}\,|A|^{N_{3}}\nonumber\\[5pt]
    &\lesssim\,|t|^{-N}\,|A|^{N_{3}}\,
    \Big\lbrace{
    \int_{|t|^{-\frac12}\lesssim|\lambda|\le1}\,
    d\lambda\,|\lambda|^{D-\ell-N-N_{1}}\,
    +\,
    \int_{|\lambda|\ge1}\,d\lambda\,
    |\lambda|^{d-\ell-N-N_{1}}\,
    }\Big\rbrace\nonumber\\[5pt]
    &\lesssim\,
    |t|^{-\frac{D}{2}}\,(1+|A|)^{N}
    +
    |t|^{-N}\,(1+|A|)^{N}\,
    \lesssim\,
    |t|^{-\frac{D}{2}}\,(1+|A|)^{N}
\end{align}
provided that $N>d$ and $N\ge\frac{D}{2}$.
In conclusion, \eqref{S3.1 Estimate I} follows
from \eqref{S3.1 Estimate I0}, \eqref{S3.1 Estimate N0>1} 
and \eqref{S3.1 Estimate N0=0}. 
\end{proof}

\begin{remark}
The analysis carried out in the proof of \cref{S3.1 Kernel1}
yields at best the following small time estimate
\begin{align}\label{S3.1 Bad estimate}
    |s_t(x)|\,
    \lesssim\,|t|^{-d}\,(1+|x|)^{d}\,
    \varphi_{0}(x)
    \qquad\forall\,0<|t|<1,\,
    \forall\,x\in\mathbb{X}.
\end{align}
\end{remark}

\subsection{Small time kernel estimate}\label{S3 subsec small}
Assume that $0<|t|<1$.
Our aim is to reduce the negative power $|t|^{-d}$
in \eqref{S3.1 Bad estimate} to $|t|^{-\frac{d}{2}}$.
We shall use different tools, depending on the size 
of $\frac{|x|}{\sqrt{|t|}}$.
If $\frac{|x|}{\sqrt{|t|}}$ is small, we decompose the 
Weyl chamber into several subcones according to the
barycentric decompositions described in 
\cref{S2 barycentric dichotomy}, and perform in each subcone 
several integrations by parts along a well chosen direction.
If $\frac{|x|}{\sqrt{|t|}}$ is large, we express
in addition the Schr\"odinger propagator in terms of 
the wave propagator 
and use the Hadamard parametrix.

\begin{theorem}\label{S3.2 Kernel2}
The following estimate holds, for $0<|t|<1$ 
and $|x|\le\sqrt{|t|}$:
\begin{align*}
    |s_{t}(x)|\,
    \lesssim\,
        |t|^{-\frac{d}{2}}\,
        \varphi_{0}(x).
\end{align*}
\end{theorem}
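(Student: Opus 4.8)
The plan is to follow the scheme of the proof of \cref{S3.1 Kernel1}, but to exploit the hypothesis $|x|\le\sqrt{|t|}$ so as to replace the loss $|t|^{-d}$ of \eqref{S3.1 Bad estimate} by $|t|^{-d/2}$, and to use the barycentric decomposition of \cref{Subsection Decomposition} to make the integrations by parts genuinely gain decay when they fall on the Plancherel density. First, exactly as in \eqref{S3.1 Kernel}, write $s_{t}(x)=C_{0}\int_{K}dk\,e^{\langle\rho,A(kx)\rangle}\,I(t,A(kx))$ with $I(t,A)=\int_{\mathfrak a}d\lambda\,|\mathbf c(\lambda)|^{-2}\,e^{-it|\lambda|^{2}}\,e^{i\langle\lambda,A\rangle}$. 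Since $\int_{K}dk\,e^{\langle\rho,A(kx)\rangle}=\varphi_{0}(x)$ and $|A(kx)|\le|x|\le\sqrt{|t|}$, it suffices to prove that $|I(t,A)|\lesssim|t|^{-d/2}$ uniformly for $0<|t|<1$ and $|A|\le\sqrt{|t|}$.

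Next I would split $I=I_{0}+I_{\infty}$ using the same radial cut-off $\chi_{t}^{0}(\lambda)=\chi(\sqrt{|t|}\,|\lambda|)$ as in \cref{S3.1 Kernel1}. On $I_{0}$, which is supported in $|\lambda|\lesssim|t|^{-1/2}$, one simply discards the oscillation: since $|t|^{-1/2}\ge1$, splitting the integral at $|\lambda|=1$ and invoking \eqref{S2 Density Estimates} gives $|I_{0}|\lesssim\int_{|\lambda|\le1}|\lambda|^{D-\ell}\,d\lambda+\int_{1\le|\lambda|\lesssim|t|^{-1/2}}|\lambda|^{d-\ell}\,d\lambda\lesssim1+|t|^{-d/2}\lesssim|t|^{-d/2}$; this is the point where it is crucial to cut at the scale $|t|^{-1/2}$ rather than at $|\lambda|\asymp1$, since the bulk frequencies $1\lesssim|\lambda|\lesssim|t|^{-1/2}$ must be absorbed here by the trivial bound rather than through integration by parts.

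For $I_{\infty}$, supported in $|\lambda|\gtrsim|t|^{-1/2}\ge1$, I would insert the smooth partition of unity \eqref{S2 Partition}, writing $I_{\infty}=\sum_{w\in W}\sum_{1\le j\le\ell}I_{\infty}^{w,j}$ with each piece localized to $\supp\chi_{w.S_{j}}$, and integrate by parts $N$ times on each piece along the direction $H_{j}:=w.\Lambda_{j}$, using $e^{-it|\lambda|^{2}}=-(2it\langle\lambda,H_{j}\rangle)^{-1}\partial_{H_{j}}e^{-it|\lambda|^{2}}$, which is legitimate because $|\langle\lambda,H_{j}\rangle|\asymp|\lambda|$ on $\supp\chi_{w.S_{j}}$ by \eqref{S2 Lemma(ii)}. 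One then book-keeps how the $N$ derivatives are distributed: those falling on $\chi_{t}^{0}$ each produce $\mathrm{O}(|t|^{1/2})$ and localize the integral to the shell $|\lambda|\asymp|t|^{-1/2}$; those falling on the factors $\langle\lambda,H_{j}\rangle^{-1}$ or on $\chi_{w.S_{j}}$ each produce $\mathrm{O}(|\lambda|^{-1})$ (the latter since $\chi_{w.S_{j}}$ is a homogeneous symbol of order $0$); those falling on $e^{i\langle\lambda,A\rangle}$ each produce $\mathrm{O}(|A|)=\mathrm{O}(|t|^{1/2})$; and the key point is that those falling on $|\mathbf c(\lambda)|^{-2}=\prod_{\alpha\in\Sigma_{r}^{+}}|\mathbf c_{\alpha}(\langle\alpha,\lambda\rangle)|^{-2}$ also each produce $\mathrm{O}(|\lambda|^{-1})$: indeed $\partial_{H_{j}}$ annihilates every factor with $\langle\alpha,w.\Lambda_{j}\rangle=0$, and by \eqref{S2 Lemma(i)} it meets the remaining factors where $|\langle\alpha,\lambda\rangle|\asymp|\lambda|$, so that $|\partial_{H_{j}}^{\,k}|\mathbf c(\lambda)|^{-2}|\lesssim|\lambda|^{\,d-\ell-k}$ on $\supp\chi_{w.S_{j}}$, i.e. the Plancherel density behaves there like a genuine symbol of order $d-\ell$ in the direction $H_{j}$. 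A routine power count — carried out separately according to whether or not some derivative hits $\chi_{t}^{0}$, as in \eqref{S3.1 Estimate N0>1} and \eqref{S3.1 Estimate N0=0} — then shows that each resulting term is $\lesssim|t|^{-d/2}$, provided $N$ is chosen large enough (e.g. $N>d$); summing over the finitely many pairs $(w,j)$ gives $|I_{\infty}|\lesssim|t|^{-d/2}$, and together with the bound on $I_{0}$ this yields $|I(t,A)|\lesssim|t|^{-d/2}$, hence the theorem.

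The main obstacle is exactly the treatment of the derivatives of $|\mathbf c(\lambda)|^{-2}$: being only a product of one-dimensional symbols and not a symbol on $\mathfrak a$, it does not on its own supply the $|\lambda|^{-1}$ per derivative needed to overcome the factor $|t|^{-N}$ coming from the integrations by parts — and \cref{S3.1 Kernel1} could only afford this because there $|t|\ge1$, so that $|t|^{-N}$ was bounded. It is precisely the barycentric decomposition, through \cref{S2 Support w.Sj}, that restores a symbol-type behavior along the well-chosen direction $w.\Lambda_{j}$ and makes the argument close.
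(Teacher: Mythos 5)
Your proposal is correct and follows essentially the same route as the paper: the same splitting $I=I_{0}+I_{\infty}$ at frequency $|t|^{-1/2}$, the trivial bound on $I_{0}$ via \eqref{S2 Density Estimates}, and on $I_{\infty}$ the barycentric partition \eqref{S2 Partition} with $N$ integrations by parts along $w.\Lambda_{j}$, using \cref{S2 Support w.Sj} to treat $|\mathbf{c}(\lambda)|^{-2}$ as a symbol of order $d-\ell$ in that direction and the hypothesis $|A|\le|x|\le\sqrt{|t|}$ in the power count. The book-keeping you sketch matches the paper's proof of \cref{S3.2 Kernel2} exactly.
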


\begin{proof}
By resuming the notation in the proof 
of \cref{S3.1 Kernel1}, we have
\begin{align*}
    s_{t}(x)\,
    =\,C_0\,
        \int_{K}\,dk\,e^{\langle{\rho,A(kx)}\rangle}\,
        \big(I_{0}(t,A)+I_{\infty}(t,A)\big).
\end{align*}
Clearly,
\begin{align}\label{S3.2 Estimate I0}
    |I_{0}(t,A)|\,
    =\,\Big|\int_{\mathfrak{a}}\,d\lambda\,
        \chi_{t}^{0}(\lambda)\,
        |\mathbf{c}(\lambda)|^{-2}\,
        e^{-it|\lambda|^{2}}\,
        e^{i\langle{\lambda,A}\rangle}\Big|
    \lesssim
        \int_{|\lambda|\lesssim\,|t|^{-\frac12}}\,d\lambda\,
        |\mathbf{c}(\lambda)|^{-2}\,
    \lesssim\,
        |t|^{-\frac{d}{2}}.
\end{align}
In order to estimate
\begin{align*}
    I_{\infty}(t,A)\,
    =\,\int_{\mathfrak{a}}\,d\lambda\,
        \chi_{t}^{\infty}(\lambda)\,
        |\mathbf{c}(\lambda)|^{-2}\,
        e^{-it|\lambda|^{2}}\,
        e^{i\langle{\lambda,A}\rangle},
\end{align*}
we split up
\begin{align*}
    I_{\infty}(t,A)\,
    =\,\sum_{w\in W}\,\sum_{1\le j\le\ell}\,
    \underbrace{
    \int_{\mathfrak{a}}\,d\lambda\,\chi_{w.S_{j}}(\lambda)\,
    \chi_{t}^{\infty}(\lambda)\,|\mathbf{c}(\lambda)|^{-2}\,
    e^{-it|\lambda|^{2}}\,e^{i\langle\lambda,A\rangle}
    }_{I_{w.S_{j}}(t,x)}.
\end{align*}
according to the barycentric decomposition 
\eqref{S2 Partition}.
Next, we study $I_{w.S_{j}}(t,x)$ by performing
$N$ integrations by parts based on
\begin{align*}
        e^{-it|\lambda|^{2}}\,=\,
        -\,\tfrac{1}{2it}\,
        \tfrac{1}{\langle{w.\Lambda_{j},\lambda}\rangle}\,
        \partial_{w.\Lambda_{j}}\,e^{-it|\lambda|^{2}},
    \end{align*}
which yields
\begin{align*}
    I_{w.S_{j}}(t,x)\,=\,(2it)^{-N}\,
    \int_{\mathfrak{a}}\,d\lambda\,e^{-it|\lambda|^{2}}
    \Big\lbrace
    \partial_{w.\Lambda_{j}}\circ
    \tfrac{1}{\langle{w.\Lambda_{j},\lambda}\rangle}
    \Big\rbrace^{N}\,
    \Big\lbrace
    \chi_{w.S_{j}}(\lambda)\,
    \chi_{t}^{\infty}(\lambda)\,
    |\mathbf{c}(\lambda)|^{-2}\,
    e^{i\langle\lambda,A\rangle}
    \Big\rbrace.
\end{align*}
As in the the proof of \cref{S3.1 Kernel1}, we assume that
\begin{itemize}[leftmargin=20pt]
    \item 
    $N_{0}$ derivatives are applied to the cut-off function
    $\chi_{t}^{\infty}(\lambda)$:
    \begin{align*}
        \partial_{w.\Lambda_{j}}^{N_{0}}\,
        \chi_{t}^{\infty}(\lambda)\,
        =\,\mathrm{O}(|t|^{\frac{N_{0}}{2}}),
    \end{align*}
    
    \item
    $N_{1}$ derivatives are applied to the factors 
    $\tfrac{1}{\langle{w.\Lambda_{j},\lambda}\rangle}$,
    which produces $\mathrm{O}(|\lambda|^{-N-N_{1}})$,
    
    \item\vspace{5pt}
    $N_{2}$ derivatives are applied to the factor
    $\chi_{w.S_{j}}(\lambda)$, which is a homogeneous
    symbol of order $0$:
    \begin{align*}
        \partial_{w.\Lambda_{j}}^{N_{2}}\,
        \chi_{w.S_{j}}(\lambda)\,
        =\,\mathrm{O}(|\lambda|^{-N_{2}}),
    \end{align*}
    
    \item
    $N_{3}$ derivatives are applied to the factor
    $e^{i\langle\lambda,A\rangle}$:
    \begin{align*}
        \partial_{w.\Lambda_{j}}^{N_{3}}\,
        e^{i\langle\lambda,A\rangle}\,
        =\,\mathrm{O}(|A|^{N_{3}}),
    \end{align*}
    
    \item 
    $N_{4}$ derivatives are applied to the Plancherel density
    $|\mathbf{c}(\lambda)|^{-2}$ (see \cref{S2 dichotomy coro}):
    \begin{align*}
    \partial_{w.\Lambda_{j}}^{N_{4}}\,
    |\mathbf{c}(\lambda)|^{-2}\,
    =\,\mathrm{O}(|\lambda|^{d-\ell-N_{4}}),
    \end{align*}
\end{itemize}
with $N_{0}+N_{1}+N_{2}+N_{3}+N_{4}=N$.
Therefore, if no derivative hits the cut-off function
$\chi_{t}^{\infty}(\lambda)$, i.e., if $N_{0}=0$, then
\begin{align*}
    |I_{w.S_{j}}(t,x)|\,
    &\lesssim\,|t|^{-N}\,
    \int_{|\lambda|\gtrsim|t|^{-\frac{1}{2}}}\,d\lambda\,
    |\lambda|^{-N-N_{1}-N_{2}+d-\ell-N_{4}}\,
    |A|^{N_{3}}\\[5pt]
    &\lesssim\,|t|^{-N}\,
    |t|^{-\frac{d}{2}+\frac{N}{2}+\frac{N_{1}}{2}
    +\frac{N_{2}}{2}+\frac{N_{4}}{2}}\,
    |t|^{\frac{N_{3}}{2}}\,
    \le\,|t|^{-\frac{d}{2}}
\end{align*}
provided that $N>d$. 
If $N_{0}\ge1$, then the integral
is reduced to a spherical shell where
$|\lambda|\asymp|t|^{-\frac{1}{2}}$, and hence
\begin{align*}
    |I_{w.S_{j}}(t,x)|\,
    \lesssim\,|t|^{-N}\,|t|^{\frac{N_{0}}{2}}\,
    |t|^{-\frac{d}{2}+\frac{N}{2}+\frac{N_{1}}{2}
    +\frac{N_{2}}{2}+\frac{N_{4}}{2}}\,
    |A|^{N_{3}}\,
    \le\,|t|^{-\frac{d}{2}},
\end{align*}
since $|A|\le|x|\le\sqrt{|t|}$.
Together with \eqref{S3.2 Estimate I0},
we conclude that $|I(t,A)|\lesssim|t|^{-\frac{d}{2}}$
and
\begin{align*}
    |s_{t}(x)|\,
    \lesssim\,
        |t|^{-\frac{d}{2}}\,\varphi_{0}(x),
\end{align*}
for all $0<|t|<1$ and $x\in\mathbb{X}$ 
such that $|x|\le\sqrt{|t|}$.
\end{proof}

The above proof shows that, for every $\lambda\in
(\supp\chi_{w.S_{j}})\cap(\supp\chi_{t}^{\infty})$,
the Plancherel density $|\mathbf{c}(\lambda)|^{-2}$
behaves like an inhomogeneous symbol of order $d-\ell$
if we differentiate it along the direction $w.\Lambda_{j}$.
When $|x|>\sqrt{|t|}$, 
we write the Schrödinger propagator in terms of the wave propagator according to the subordination principle, and use an improved Hadamard parametrix.
Let us express the Schr\"odinger propagator
\begin{align*}
    e^{-it\mathbf{D}^{2}}\,
    =\,\underbrace{\pi^{-\frac{1}{2}}\,
        e^{-i\frac{\pi}{4}\sign{(t)}}}_{C_1}
        |t|^{-\frac{1}{2}}\,
        \int_{0}^{+\infty}ds\,
        e^{\frac{i}{4t}s^{2}}\cos(s\mathbf{D})
\end{align*}
in terms of the wave propagator and correspondingly
\begin{align}\label{S3.2 Kernel (wave)}
    s_{t}(x)\,
    =\,C_1\,|t|^{-\frac{1}{2}}\,
        \int_{0}^{+\infty}ds\,
        e^{\frac{i}{4t}s^{2}}\,\Phi_{s}(x)
\end{align}
for their bi-$K$-invariant convolution kernels.
On the one hand, by finite propagation speed,
\begin{align}\label{S3.2 Propagation speed}
    \Phi_{s}(x)\,=\,0
    \qquad\textnormal{if}\quad |x|>|s|.
\end{align}
On the other hand, recall the Hadamard parametrix
\begin{align}\label{S3.2 Parametrix}
    \Phi_{s}(\exp{H})\,
    =\,J(H)^{-\frac{1}{2}}\,|s|\,
    \sum_{k=0}^{+\infty}\,4^{-k}\,U_{k}(H)\,
    R_{+}^{k-\frac{d-1}{2}}(s^{2}-|H|^2)
    \qquad\forall\,s\in\mathbb{R}^{*},\,
    \forall\,H\in\mathfrak{p},
\end{align}
where $J$ denotes the Jacobian of the exponential map
$\mathfrak{p}\rightarrow{G/K}$, which is given by
\begin{align*}
\textstyle
    J(H)\,
    =\,\prod_{\alpha\in\Sigma^{+}}\,
    \big(
    \frac{\sinh\langle\alpha,H\rangle}{\langle\alpha,H\rangle}
    \big)^{m_{\alpha}}
    \qquad\forall\,H\in\mathfrak{a}^{+},
\end{align*}
and $\{R_{+}^{z}\,|\,z\in\mathbb{C}\}$ denotes the analytic family 
of Riesz distributions on $\mathbb{R}$, which is defined by
\begin{align*}
    \,R_{+}^{z}(r)\,=\,
    \begin{cases}
    \,\Gamma (z)^{-1}\,r^{z-1}
    &\textnormal{if \,}r>0\\
    \;0&\textnormal{if \,}r\le0
    \end{cases}
    \qquad\forall\,\re z>0.
\end{align*}
This parametrix was constructed and used in various 
settings, see for instance \cite{Ber1977,Hor1994,CGM2001}.
We refer to \cite[Appendix B]{AnZh2020} for the details about 
the wave propagator $\cos{(t\sqrt{-\Delta})}$ 
associated to the unshifted Laplacian $\Delta$ 
on non-compact symmetric spaces.
Notice that the same results hold for $\cos{(t\mathbf{D})}$.
Specifically \eqref{S3.2 Parametrix} is an 
asymptotic expansion
\begin{align}\label{S3.2 Expansion}
    \Phi_{s}(\exp{H})\,
    =\,J(H)^{-\frac{1}{2}}\,|s|\,
    \sum_{k=0}^{[d/2]}\,4^{-k}\,U_{k}(H)\,
    R_{+}^{k-\frac{d-1}{2}}(s^{2}-|H|^2)\,
    +\,E(s,H)
\end{align}
where the coefficients $U_{k}$ are $\Ad{K}$-invariant
smooth functions on $\mathfrak{p}$, which are bounded
together with their derivatives, while the remainder 
satisfies
\begin{align}\label{S3.2 Expansion remainder}
    |E(s,H)|\,
    \lesssim\,(1+|s|)^{3(\frac{d}{2}+1)}\,
    e^{-\langle\rho,H\rangle}
    \qquad\forall{s}\in\mathbb{R}^{*},\,
    \forall{H}\in\overline{\mathfrak{a}^{+}}.
\end{align}

Let us split up
\begin{align*}
    \int_{0}^{+\infty}\,ds\,
    =\,
    \int_{0}^{+\infty}ds\,\chi_{0}(\tfrac{s}{|x|})\,
    +\,\int_{0}^{+\infty}ds\,\chi_{1}(\tfrac{s}{|x|})\,
    +\,\int_{0}^{+\infty}ds\,\chi_{\infty}(\tfrac{s}{|x|})
\end{align*}
in \eqref{S3.2 Kernel (wave)} by means of a smooth
partition of unity $1=\chi_{0}+\chi_{1}+\chi_{\infty}$
on $\mathbb{R}$ such that
\begin{align*}
    \begin{cases}
    \supp\chi_{0}\,\subset\,(-1,1),\\[5pt]
    \supp\chi_{1}\,\subset\,
        (-2C_2,-\frac12)\,\cup\,(\frac12,2C_2),\\[5pt]
    \supp\chi_{\infty}\,\subset\,
        (-\infty,-C_2)\,\cup\,(C_2,+\infty)
    \end{cases}
\end{align*}
where the choice of $C_2>1$ will be specified later.
Then the contribution of the first integral vanishes
according to \eqref{S3.2 Propagation speed} and we are
left with
\begin{align*}
    s_{t}(x)\,
    =\,
    \underbrace{
        C_1\,|t|^{-\frac{1}{2}}\,
        \int_{0}^{+\infty}ds\,
        \chi_{1}(\tfrac{s}{|x|})\,
        e^{\frac{i}{4t}s^{2}}\,\Phi_{s}(x)
    }_{s_{t}^{1}(x)}
    \,+\,
    \underbrace{
        C_1\,|t|^{-\frac{1}{2}}\,
        \int_{0}^{+\infty}ds\,
        \chi_{\infty}(\tfrac{s}{|x|})\,
        e^{\frac{i}{4t}s^{2}}\,\Phi_{s}(x)
    }_{s_{t}^{\infty}(x)}
\end{align*}
where $s_{t}^{1}(x)$ and $s_{t}^{\infty}(x)$
are bi-$K$-invariant.
Let us first study $s_{t}^{\infty}(x)$ by using again
the barycentric decomposition. In comparison with the
proof of \cref{S3.2 Kernel2}, we have now $|x|>\sqrt{|t|}$ 
and there is an additional integral over $s\in(1,\infty)$ 
to control. Let us state the theorem.
\begin{theorem}
The following estimate holds, for all $0<|t|<1$ 
and $|x|>\sqrt{|t|}:$
\begin{align*}
|s_{t}^{\infty}(x)|\,\lesssim\,|t|^{-\frac{d}{2}}\,\varphi_{0}(x).
\end{align*}
\end{theorem}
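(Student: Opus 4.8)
The starting point is the representation
\[
    s_{t}^{\infty}(x)\,
    =\,C_2\,|t|^{-\frac{1}{2}}\,
        \int_{0}^{+\infty}ds\,
        \chi_{\infty}(\tfrac{s}{|x|})\,
        e^{\frac{i}{4t}s^{2}}\,\Phi_{s}(x),
\]
where on $\supp\chi_{\infty}(\tfrac{\cdot}{|x|})$ we have $s\gtrsim|x|>\sqrt{|t|}$. I would substitute the Hadamard parametrix expansion \eqref{S3.2 Expansion} for $\Phi_{s}(x)$ and treat the finitely many main terms indexed by $0\le k\le[d/2]$ together with the remainder $E(s,H)$ separately. For the remainder, the bound \eqref{S3.2 Expansion remainder} gives $|E(s,H)|\lesssim(1+s)^{3(d/2+1)}e^{-\langle\rho,x^+\rangle}$; since we also get a cutoff forcing $s\lesssim\,?$ — no, there is no upper cutoff on $s$, so one needs an extra integration by parts in $s$ using $e^{\frac{i}{4t}s^{2}}=\frac{2t}{is}\partial_s e^{\frac{i}{4t}s^{2}}$ to gain decay in $s$ and convert the $|t|^{-1/2}$ prefactor into something harmless; after enough such integrations by parts the $s$-integral converges and contributes $\lesssim|t|^{1/2-1/2}e^{-\langle\rho,x^+\rangle}$, which is $\lesssim|t|^{-d/2}\varphi_0(x)$ since $|t|<1$ and $\varphi_0(x)\asymp(1+|x|)^{\#}e^{-\langle\rho,x^+\rangle}$.

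For the main terms, the key identity is that each Riesz factor $R_+^{k-(d-1)/2}(s^2-|x|^2)$, as a distribution in $s$, is (up to constants and lower-order terms from differentiating $(s^2-|x|^2)$) a derivative $\partial_s^{m}$ of a locally integrable power of $(s^2-|x|^2)_+$. I would integrate by parts in $s$ to move these derivatives onto the smooth factor $\chi_{\infty}(\tfrac{s}{|x|})\,e^{\frac{i}{4t}s^{2}}\,J(x^+)^{-1/2}U_k(x^+)|s|$. Differentiating $e^{\frac{i}{4t}s^{2}}$ produces powers of $\frac{s}{t}$, i.e. factors of size $|t|^{-1}|s|\asymp|t|^{-1}|x|$ on the support; differentiating $\chi_\infty$ or the remaining smooth factors is harmless. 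The worst case $k=0$ yields roughly $\partial_s^{(d-1)/2}$ landing on the exponential, producing $|t|^{-(d-1)/2}|x|^{(d-1)/2}\cdot(\text{bounded})$; combined with the prefactor $|t|^{-1/2}$ and the fact that the surviving integral $\int ds\,(s^2-|x|^2)_+^{-1/2}\chi_\infty(\tfrac{s}{|x|})$ over $s\asymp|x|$ is $O(1)$ uniformly... wait, that gives $|t|^{-d/2}|x|^{(d-1)/2}$, which is too large by a polynomial factor in $|x|$. So one must be more careful: the factor $J(x^+)^{-1/2}\asymp\prod_\alpha(\langle\alpha,x^+\rangle/\sinh\langle\alpha,x^+\rangle)^{m_\alpha/2}\asymp(1+|x|)^{\#}e^{-\langle\rho,x^+\rangle}$ supplies the decaying exponential and a polynomial, while the polynomial growth $|x|^{(d-1)/2}$ is absorbed into the $(1+|x|)^N\varphi_0(x)$-type bound — but here the target is the cleaner $|t|^{-d/2}\varphi_0(x)$ without the extra polynomial, so I would instead exploit that on $\supp\chi_\infty$ we have $s\gtrsim C_3|x|$ with $C_3$ large, so $s^2-|x|^2\asymp s^2$, making $R_+^{k-(d-1)/2}(s^2-|x|^2)$ behave like $s^{2k-(d-1)}$ times a smooth function, and then the $s$-integral against $e^{\frac{i}{4t}s^2}$ is a genuine oscillatory integral in $s$ alone. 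Repeated integration by parts in $s$ on $(C_3|x|,\infty)$ gains arbitrary decay: each step costs a derivative but gains $|t|/s$, and since $s\gtrsim|x|>\sqrt{|t|}$ we have $|t|/s\lesssim|t|/|x|<\sqrt{|t|}$, so after $M$ integrations by parts we pick up $(\sqrt{|t|})^{M}$ and the integral converges, yielding a bound $\lesssim|t|^{-1/2}(\sqrt{|t|})^{M-(d-1)}|x|^{\#}e^{-\langle\rho,x^+\rangle}\lesssim|t|^{-d/2}\varphi_0(x)$ for $M$ chosen so that $-1/2+(M-(d-1))/2\ge-d/2$, i.e. $M\ge 0$ suffices already — so in fact the $\chi_\infty$ piece is quite soft once $s\asymp s^2-|x|^2$ is used.

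The genuine subtlety, and what I expect to be the main obstacle, is that even though $s$ is bounded below by $C_3|x|$, it is \emph{not} bounded above, so one must ensure the large-$s$ tail of the parametrix is controlled; this is where \eqref{S3.2 Expansion remainder} and the structure of $U_k$ (bounded with bounded derivatives) are essential, and where one must also check that the full series \eqref{S3.2 Parametrix}, not just the truncation, can be handled — but since we only need an upper bound and the truncation error $E(s,H)$ already satisfies a clean estimate, the plan is to use \eqref{S3.2 Expansion} throughout and never the full series. Concretely: (i) plug in \eqref{S3.2 Expansion}; (ii) for the $E$-term, integrate by parts in $s$ enough times to make $\int_{0}^{\infty}ds\,|\partial_s^{M}(\chi_\infty(\tfrac{s}{|x|})(1+s)^{3(d/2+1)})|\,|s/t|^{M}<\infty$ and absorb the result; (iii) for each main term, use $s^2-|x|^2\asymp s^2$ on $\supp\chi_\infty(\tfrac{\cdot}{|x|})$ to write $R_+^{k-(d-1)/2}(s^2-|x|^2)=s^{2k-(d-1)}\times(\text{smooth, symbol in }s)$, reducing to a one-dimensional oscillatory integral $\int_0^\infty ds\,e^{\frac{i}{4t}s^2}\,a(s)$ with $a$ a symbol of controlled order and support in $\{s\gtrsim|x|\}$; (iv) apply stationary-phase/non-stationary-phase in $s$ (the phase $s^2/4t$ has no stationary point on the support since $s\gtrsim|x|>0$) together with $\partial_s(s^2/(4t))=s/(2t)\gtrsim|x|/|t|\gtrsim 1/\sqrt{|t|}$ to integrate by parts freely and gain powers of $\sqrt{|t|}$; (v) collect: the prefactor $|t|^{-1/2}$, the gains from integration by parts, and the geometric factor $J(x^+)^{-1/2}\lesssim(1+|x|)^{\#}e^{-\langle\rho,x^+\rangle}\lesssim\varphi_0(x)$ combine to $\lesssim|t|^{-d/2}\varphi_0(x)$. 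The barycentric decomposition alluded to in the paragraph before the statement is needed only if one additionally wants to sharpen the polynomial factor or handle the region $\sqrt{|t|}<|x|\lesssim C_3|x|$ more carefully; in the regime $s\gtrsim C_3|x|$ treated by $\chi_\infty$ it is the plain non-stationary-phase argument in $s$ that does the work, and the only place real care is required is matching the counting of $s$-derivatives against the order of the Riesz distribution so that no negative-power singularity at $s=|x|$ survives — but on $\supp\chi_\infty$ that singularity is outside the integration region, which is exactly why this piece is the easier of the two ($s_t^1$ versus $s_t^\infty$).
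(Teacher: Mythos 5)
Your approach diverges from the paper's at the very first step, and the divergence creates a genuine gap. You substitute the Hadamard parametrix \eqref{S3.2 Expansion} into the $\chi_{\infty}$ piece, i.e.\ on the unbounded region $s\ge C_3|x|$, and you plan to control the remainder term by integrating by parts in $s$. But the only information available about $E(s,H)$ is the size bound \eqref{S3.2 Expansion remainder}; there is no estimate on $\partial_s^M E(s,H)$, and every integration by parts based on $e^{\frac{i}{4t}s^{2}}=\frac{2t}{is}\partial_s e^{\frac{i}{4t}s^{2}}$ forces such derivatives onto $E$. Without integration by parts the remainder contribution is $|t|^{-1/2}\int_{C_3|x|}^{\infty}(1+s)^{3(\frac d2+1)}\,ds$, which diverges. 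This is not a peripheral issue: when $d$ is odd, every Riesz factor $R_+^{k-\frac{d-1}{2}}(s^2-|x|^2)$ with $0\le k\le[d/2]$ is a derivative of the Dirac mass at $s=|x|$, hence vanishes identically on $\supp\chi_{\infty}(\cdot/|x|)$ where $s\ge C_3|x|>|x|$; so in odd dimensions the \emph{entire} content of $s_t^{\infty}$ lies in the remainder term that your plan cannot handle. (A secondary problem: even where your main-term analysis goes through, it produces polynomial losses in $|x|$ that cannot be absorbed into $\varphi_0(x)$, which carries only the specific factor $\prod_{\alpha\in\Sigma_r^+}(1+\langle\alpha,x^+\rangle)$; the stated bound has no extra $(1+|x|)^N$.)

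The paper never uses the parametrix for $s_t^{\infty}$ — the parametrix is reserved for $s_t^{1}$, where $s\asymp|x|$ is confined to a bounded range of the singular support. For $s_t^{\infty}$ it instead writes $\Phi_s$ via the inverse spherical Fourier transform, splits frequencies at $|\lambda|\asymp|t|^{-1/2}$, handles the low-frequency piece by two integrations by parts in $s$ (legitimate there, since $\partial_s^k I_0(s,t,x)=\mathrm{O}(|t|^{-\frac{d+k}{2}}\varphi_0(x))$ is explicit), and handles the high-frequency piece by the barycentric decomposition together with non-stationary phase in $\lambda$: the choice of $C_3$ via \cref{S2 Support w.Sj} guarantees $|\partial_{w.\Lambda_j}(s|\lambda|-\langle\lambda,A\rangle)|\gtrsim|s|$ on $\supp\chi_{w.S_j}$ when $|s|\ge C_3|x|$, so $N$ integrations by parts in $\lambda$ yield the decay $|s|^{-N}$ that makes the external $s$-integral converge. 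So the barycentric decomposition, which you set aside as an optional refinement, is in fact the mechanism that replaces the unusable parametrix remainder on this region; to repair your argument you would either need derivative bounds on $E(s,H)$ in $s$ (not provided, and not available from \cite{AnZh2020} in this form) or revert to the frequency-side argument.
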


\begin{proof}
We express
\begin{align*}
    s_{t}^{\infty}(x)\,
    =\, \tfrac{1}{2}\,C_0\,C_1\,
        |t|^{-\frac{1}{2}}\,
        \int_{-\infty}^{+\infty}ds\,
        \chi_{\infty}(\tfrac{s}{|x|})\,
        e^{\frac{i}{4t}s^{2}}\,
        \int_{\mathfrak{a}}\,d\lambda\,
        |\mathbf{c}(\lambda)|^{-2}\,
        \varphi_{\lambda}(x)\,
        e^{-is|\lambda|}
\end{align*}
by evenness and by expressing the wave kernel $\Phi_{s}$
by means of the inverse spherical Fourier transform.
Let us split up
$s_{t}^{\infty}
=\tfrac{1}{2}\,C_0\,C_1\,
(s_{t}^{\infty,0}+s_{t}^{\infty,\infty})$,
where
\begin{align*}
    s_{t}^{\infty,0}(x)\,
    =\,|t|^{-\frac12}
        \int_{-\infty}^{+\infty}ds\,
        \chi_{\infty}(\tfrac{s}{|x|})\,
        e^{\frac{i}{4t}s^{2}}\,
        \underbrace{
        \int_{\mathfrak{a}}\,d\lambda\,
        \chi_{t}^{0}(\lambda)\,
        |\mathbf{c}(\lambda)|^{-2}\,
        \varphi_{\lambda}(x)\,
        e^{-is|\lambda|}
        }_{I_{0}(s,t,x)}
\end{align*}
and
\begin{align*}
    s_{t}^{\infty,\infty}(x)\,
    =\,|t|^{-\frac12}
        \int_{-\infty}^{+\infty}ds\,
        \chi_{\infty}(\tfrac{s}{|x|})\,
        e^{\frac{i}{4t}s^{2}}\,
        \underbrace{
        \int_{\mathfrak{a}}\,d\lambda\,
        \chi_{t}^{\infty}(\lambda)\,
        |\mathbf{c}(\lambda)|^{-2}\,
        \varphi_{\lambda}(x)\,
        e^{-is|\lambda|}
        }_{I_{\infty}(s,t,x)}.
\end{align*}
Recall that 
$\chi_{t}^{0}(\lambda)=\chi(\sqrt{|t|}|\lambda|)$
is a radial cut-off function such that
$\supp\chi_{t}^{0}\subset\,B(0,2|t|^{-\frac12})$,
$\chi_{t}^{0}=1$ on $B(0,|t|^{-\frac12})$
and $\chi_{t}^{\infty}=1-\chi_{t}^{0}$.
\vspace{10pt}

\noindent\textbf{Estimate of $s_{t}^{\infty,0}$.}
Notice that the obvious estimate
$|I_{0}(s,t,x)|\lesssim|t|^{-\frac{d}{2}}|
\varphi_{0}(x)$
is not enough for our purpose. We need indeed to 
compensate on the one hand the factor $|t|^{-\frac12}$
and to get on the other hand enough decay in $|s|$
to ensure the convergence of the external integral.
To this end, we perform two integrations by parts based on 
\begin{align}\label{S3.2 Phase s}
    e^{\frac{i}{4t}s^{2}}\,
    =\,-\tfrac{2it}{s}\,
        \tfrac{\partial}{\partial{s}}\,
        e^{\frac{i}{4t}s^{2}}
\end{align}
and obtain this way
\begin{align}\label{S3.2 S0}
    s_{t}^{\infty,0}(x)\,
    =\,-4\,|t|^{\frac32}\,
        \int_{-\infty}^{+\infty}\,ds\,
        e^{\frac{i}{4t}s^{2}}\,
        \tfrac{\partial}{\partial{s}}
        \big(
        \tfrac{1}{s}\,\tfrac{\partial}{\partial{s}}
        \big)\,
        \big\lbrace
        \tfrac{1}{s}\,\chi_{\infty}(\tfrac{s}{|x|})\,
        I_{0}(s,t,x)\big\rbrace.
\end{align}
Notice that
\begin{align*}
    \big|(\tfrac{\partial}{\partial{s}})^{k}\,
    I_{0}(s,t,x)\big|\,
    \lesssim\,|t|^{-\frac{d+k}{2}}\,\varphi_{0}(x)
    \qquad\forall\,k\in\mathbb{N}.
\end{align*}
If any derivative hits $\chi_{\infty}(\frac{s}{|x|})$ in \eqref{S3.2 S0},
the integral reduces to two intervals where $|s|\asymp|x|$,
and the corresponding contribution is estimated by
\begin{align*}
    |t|^{-\frac{d-3}{2}}\,\varphi_{0}(x)\,
    \int_{|s|\asymp|x|}\,ds\,
    \lbrace
    s^{-2}\,|x|^{-2}\,+\,s^{-3}\,|x|^{-1}\,
    +\,s^{-2}\,|x|^{-1}\,|t|^{-\frac12}
    \rbrace\,
    \lesssim\,|t|^{-\frac{d}{2}}\,\varphi_{0}(x),
\end{align*}
since $|t|^{\frac12}<|x|$.
Otherwise we end up with the estimate
\begin{align*}
     |t|^{-\frac{d-3}{2}}\,\varphi_{0}(x)\,
    \int_{|s|\gtrsim|x|}\,ds\,
    \lbrace
    s^{-4}\,+\,s^{-3}\,|t|^{-\frac12}\,+\,s^{-2}\,|t|^{-1}
    \rbrace\,
    \lesssim\,|t|^{-\frac{d}{2}}\,\varphi_{0}(x).
\end{align*}
In conclusion,
\begin{align*}
|s_{t}^{\infty,0}(x)|\,\lesssim\,|t|^{-\frac{d}{2}}\,\varphi_{0}(x),
\end{align*}
for all $0<|t|<1$ and $x\in\mathbb{X}$ such that
$|x|>\sqrt{|t|}$.
\vspace{10pt}

\noindent\textbf{Estimate of $s_{t}^{\infty,\infty}$.}
Let us turn to
\begin{align*}
    s_{t}^{\infty,\infty}(x)\,
    =\,\int_{-\infty}^{+\infty}ds\,
        \chi_{\infty}(\tfrac{s}{|x|})\,
        e^{\frac{i}{4t}s^{2}}\,{I_{\infty}(s,t,x)}.
\end{align*}
We will prove the following estimate, for any integer $N>d$,
\begin{align}\label{S3.2 Estimate I infty s}
    |I_{\infty}(s,t,x)|
    \lesssim\,|s|^{-N}\,
        |t|^{-\frac{d}{2}+\frac{N}{2}}\,
        \varphi_{0}(x)
    \qquad\forall\,|s|\ge C_2|x|.
\end{align}
Then, as $|x|>\sqrt{|t|\,}$, we conclude easily that
\begin{align}\label{S3.2 S Infinity}
    |s_{t}^{\infty,\infty}(x)|\,
    \lesssim\,
        |t|^{-\frac{d}{2}+\frac{N}{2}-\frac12}\,
        \varphi_{0}(x)\,
        \int_{|s|\gtrsim|x|}ds\,|s|^{-N}
    \lesssim\,|t|^{-\frac{d}{2}}\,
        \big(\tfrac{\sqrt{|t|}}{|x|}\big)^{N-1}\,
        \varphi_{0}(x)\,
    \lesssim\,|t|^{-\frac{d}{2}}\,\varphi_{0}(x).
\end{align}
In order to establish \eqref{S3.2 Estimate I infty s},
we express
\begin{align*}
    I_{\infty}(s,t,x)\,
    &=\,\int_{\mathfrak{a}}\,d\lambda\,
        \chi_{t}^{\infty}(\lambda)\,
        |\mathbf{c}(\lambda)|^{-2}\,
        \varphi_{\lambda}(x)\,
        e^{-is|\lambda|}\\[5pt]
    &=\,\int_{K}dk\,e^{-\langle\rho,A\rangle}
        \sum_{w\in W}\,\sum_{1\le j\le\ell}\,
        \underbrace{
        \int_{\mathfrak{a}}\,d\lambda\,
        \chi_{w.S_{j}}(\lambda)\,
        \chi_{t}^{\infty}(\lambda)\,
        |\mathbf{c}(\lambda)|^{-2}\,
        e^{-i(s|\lambda|-\langle\lambda,A\rangle)}
        }_{I_{w.S_{j}}(s,t,A)}
\end{align*}
by using again the integral formula \eqref{S2 Spherical Function}
and the barycentric decomposition \eqref{S2 Partition}.
According to \eqref{S2 barycentric 1}, we can choose $C_2>0$
such that, if $|s|\ge{C_2|x|}$, then
\begin{align*}
    |\partial_{w.\Lambda_{j}}
    (s|\lambda|-\langle\lambda,A\rangle)|\,
    &=\,\Big|s\tfrac{\langle{w.\Lambda_{j},\lambda}\rangle}{|\lambda|}\,
    -\,\langle{w.\Lambda_{j},A}\rangle\Big|\\[5pt]
    &\ge\,|s|\,
    \underbrace{
    \tfrac{|\langle{w.\Lambda_{j},\lambda}\rangle|}{|\lambda|}
    }_{\gtrsim1}
    -\,
    \underbrace{|\langle{w.\Lambda_{j},A}\rangle|}_{
    \lesssim\,|x|}
    \gtrsim\,|s|,
\end{align*}
for every
$\lambda\in(\supp\chi_{t}^{\infty})\cap(\supp\chi_{w.S_{j}})$.
Under these assumptions, the phase function 
$\lambda\mapsto\,s|\lambda|-\langle\lambda,A\rangle$
has no critical point along the direction $w.\Lambda_{j}$.
By performing $N$ integrations by parts based on
\begin{align*}
    e^{-i(s|\lambda|-\langle\lambda,A\rangle)}\,
    =\,\tfrac{i}{\partial_{w.\Lambda_{j}}
    (s|\lambda|-\langle\lambda,A\rangle)}\;
    \partial_{w.\Lambda_{j}}\,
    e^{-i(s|\lambda|-\langle\lambda,A\rangle)},
\end{align*}
we write
\begin{align*}
    I_{w.S_{j}}(s,t,A)\,
    =\,(is)^{-N}\,
    \int_{\mathfrak{a}}\,&d\lambda\,
    e^{-i(s|\lambda|-\langle\lambda,A\rangle)}\\[5pt]
    &\Big\lbrace\partial_{w.\Lambda_{j}}\circ
    \tfrac{s}{\partial_{w.\Lambda_{j}}
    (s|\lambda|-\langle\lambda,A\rangle)}\Big\rbrace^{N}\,
    \Big\lbrace \chi_{w.S_{j}}(\lambda)\,
    \chi_{t}^{\infty}(\lambda)\,
    |\mathbf{c}(\lambda)|^{-2}
    \Big\rbrace.
\end{align*}
Assume that
\vspace{2pt}
\begin{itemize}[leftmargin=20pt]
\item 
$N_{0}$ derivatives are applied to the cut-off function $\chi_{t}^{\infty}(\lambda)$,
which produces $\mathrm{O}(|t|^{\frac{N_0}2})$,
\vspace{5pt}
\item
$N_{1}$ derivatives are applied to the factors
$\tfrac{s}{\partial_{w.\Lambda_{j}}(s|\lambda|-\langle\lambda,A\rangle)}$,
which produces $\mathrm{O}(|\lambda|^{-N_1})$,
\item
$N_{2}$ derivatives are applied to the cut-off functions $\chi_{w.S_{j}}(\lambda)$,
which produces $\mathrm{O}(|\lambda|^{-N_2})$,
\vspace{4pt}
\item 
$N_{3}$ derivatives are applied to the Plancherel density $|\mathbf{c}(\lambda)|^{-2}$,
which produces 
$\mathrm{O}(|\lambda|^{d-\ell-N_3})$,
\end{itemize}
\vspace{2pt}
with $N_{0}+N_{1}+N_{2}+N_{3}=N$.
Again, if some derivatives hit $\chi_{t}^{\infty}(\lambda)$, i.e., if $N_0\ge1$,
then the integral reduces to a spherical shell where $|\lambda|\asymp\,|t|^{-\frac12}$,
and its contribution is estimated by
\begin{align*}
    |s|^{-N}\,
    |t|^{-\frac{\ell}{2}}\,|t|^{\frac{N_{0}}{2}}\,
    |t|^{\frac{N_{1}}{2}}\,|t|^{\frac{N_{2}}{2}}\,
    |t|^{-\frac{d}{2}+\frac{\ell}{2}+\frac{N_{3}}{2}}\,
    =\,|s|^{-N}\,|t|^{-\frac{d}{2}+\frac{N}{2}}.
\end{align*}
If $N_{0}=0$, then
\begin{align*}
    |I_{w.S_{j}}(s,t,A)|\,
    \lesssim\,|s|^{-N}
    \int_{|\lambda|\gtrsim\,|t|^{-\frac12}}\,d\lambda\,
    |\lambda|^{-N_{1}}\,|\lambda|^{-N_{2}}\,
    |\lambda|^{d-\ell-N_{3}}\,
    \lesssim\,|s|^{-N}\,|t|^{-\frac{d}{2}+\frac{N}{2}}
\end{align*}
provided that $N>d$.
This proves \eqref{S3.2 Estimate I infty s} and hence
\eqref{S3.2 S Infinity}.
\end{proof}

\begin{theorem}\label{S3.2 Kernel3}
The following estimate holds, for all $0<|t|<1$ and $|x|>\sqrt{|t|}$:
\begin{align*}
    |s_{t}^{1}(x)|\,
    \lesssim\,
        |t|^{-\frac{d}{2}}\,
        (1+|x|)^{\frac{3}{2}d+4}\,
        e^{-\langle\rho,x^{+}\rangle}.
\end{align*}
\end{theorem}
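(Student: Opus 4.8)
The term $s_t^1(x)$ involves the wave kernel $\Phi_s(x)$ for $s\asymp|x|$, i.e. precisely on the light cone, where the Hadamard parametrix is singular. The strategy is to insert the asymptotic expansion \eqref{S3.2 Expansion} into the defining integral
\begin{align*}
    s_t^1(x)\,=\,C_2\,|t|^{-\frac12}\,\int_0^{+\infty}ds\,\chi_1(\tfrac{s}{|x|})\,e^{\frac{i}{4t}s^2}\,\Phi_s(x),
\end{align*}
so that $s_t^1$ splits into a main part coming from the finitely many terms $k=0,\dots,[d/2]$ and a remainder part coming from $E(s,H)$. The remainder part is the easy one: using \eqref{S3.2 Expansion remainder}, the bound $J(H)^{-1/2}\asymp e^{-\langle\rho,H\rangle}$ absorbed already, and the fact that $s\asymp|x|\le(1+|x|)$ on $\supp\chi_1$, one gets a contribution $\lesssim|t|^{-1/2}(1+|x|)^{3(\frac d2+1)+1}e^{-\langle\rho,x^+\rangle}$, which is better than claimed since $|t|^{-1/2}\le|t|^{-d/2}$ for $|t|<1$ and $3(\frac d2+1)+1\le\frac32 d+4$.

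For the main part, fix $k$ and write $R_+^{k-\frac{d-1}{2}}(s^2-|H|^2)$ explicitly. With $H=x^+$ so $|H|=|x|$, substitute $u=s^2-|x|^2$ (or keep $s$ and treat $R_+^z$ as a distribution in $s$ when $z\le0$). The point is that $R_+^z(s^2-|x|^2)$, as a distribution in $s$, is supported on $s\ge|x|$ and, after the change of variables, the phase $e^{\frac{i}{4t}s^2}=e^{\frac{i}{4t}(u+|x|^2)}$ becomes $e^{\frac{i}{4t}|x|^2}e^{\frac{i}{4t}u}$; the prefactor $e^{\frac{i}{4t}|x|^2}$ is a harmless unit modulus. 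One is then left, for each $k$, with an integral of the form $\int_0^\infty du\,\chi_1(\cdot)\,e^{\frac{i}{4t}u}\,u^{k-\frac{d-1}{2}-1}(\text{smooth})$ — a one-dimensional oscillatory integral with a homogeneous-type singularity $u^{\beta}$, $\beta=k-\frac{d+1}{2}$, at the origin. The model estimate is $\big|\int_0^\infty e^{\frac{i}{4t}u}\,u^{\beta}\,\psi(u)\,du\big|\lesssim|t|^{\beta+1}$ when $-1<\beta+1$, i.e. essentially $|t|^{k-\frac{d-1}{2}}$; combined with the overall $|t|^{-1/2}$ this yields $|t|^{k-\frac d2}$, and summing over $0\le k\le[d/2]$ the worst term is $k=0$, giving the desired $|t|^{-d/2}$. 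For the negative (distributional) values of the exponent one either integrates by parts to push $R_+^z$ up to positive order — picking up $\partial_s$ derivatives that fall on $\chi_1(s/|x|)$, producing powers of $|x|^{-1}$, and on $e^{\frac{i}{4t}s^2}$, producing powers of $t/s\lesssim t/|x|$ — or uses the standard analytic-continuation estimate for $\int e^{\frac i{4t}u}R_+^z(u)\psi(u)\,du$. In all cases the polynomial loss in $|x|$ is at most of order $d/2$ plus the $3(\frac d2+1)$ already present, well within $(1+|x|)^{\frac32 d+4}$, and the exponential $e^{-\langle\rho,x^+\rangle}$ comes solely from $J(x^+)^{-1/2}$.

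The choice of $C_3$ in the partition $\chi_0+\chi_1+\chi_\infty$ must be made so that on $\supp\chi_1$ one has $\frac12|x|\le s\le 2C_3|x|$, which is what makes all the $|x|^{-1}$ factors from differentiating $\chi_1(s/|x|)$ legitimate and keeps $s\asymp|x|$ throughout; any $C_3>1$ works here, and it does not interact with the earlier choice governing $s_t^\infty$.

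The main obstacle is the bookkeeping of the Riesz-distribution term $R_+^{k-\frac{d-1}{2}}(s^2-|H|^2)$ when $k-\frac{d-1}{2}\le 0$: one must justify integrating by parts in $s$ across the singularity at $s=|x|$ and keep careful track of which derivatives land on the cutoff, on the oscillatory factor $e^{\frac i{4t}s^2}$, and on the smooth coefficient $U_k(H)J(H)^{-1/2}$, so that the final power of $|t|$ is exactly $|t|^{-d/2}$ (not worse) while the accumulated power of $(1+|x|)$ stays bounded by $\frac32 d+4$. This is routine oscillatory-integral analysis but requires care at the endpoint $k=0$, which is the term that saturates the $|t|^{-d/2}$ rate.
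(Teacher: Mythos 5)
Your strategy coincides with the paper's: insert the Hadamard expansion \eqref{S3.2 Expansion}, bound the remainder exactly as in \eqref{S3.2 Remainder estimate} (your arithmetic $|t|^{-1/2}|x|(1+|x|)^{3(\frac d2+1)}\le |t|^{-\frac d2}(1+|x|)^{\frac32 d+4}$ is the same), and reduce each term $k$ to a one-dimensional oscillatory integral against the Riesz distribution, with target bound $|t|^{k-\frac{d-1}{2}}$, which is precisely the paper's estimate \eqref{S3.2 Ik estimate}. But two points in your treatment of the main terms are off. First, the integration-by-parts mechanism you describe is backwards: to give meaning to $R_+^{k-\frac{d-1}{2}}$ one writes it as $\partial^m$ of a positive-order Riesz kernel and moves those $m$ derivatives onto $\chi_1(\tfrac{s}{|x|})\,e^{\frac{i}{4t}s^2}$, and each derivative hitting the exponential produces the \emph{large} factor $\tfrac{s}{2t}\asymp\tfrac{|x|}{t}$, not a small factor $t/s$ as you claim. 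The estimate closes only because these large powers are exactly offset by the homogeneity prefactor $|x|^{2k-d+1}$ (equivalently, the gain $|t|^{z+m}$ from the rescaled positive-order kernel), which is the computation the paper carries out explicitly in the odd-dimensional case; with the bookkeeping as you state it, the exponents would not come out to \eqref{S3.2 Ik estimate}.

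Second, and more importantly, the decisive step is the endpoint $k=0$ when the Riesz exponent is not an integer ($d$ even), and there your ``model estimate'' is asserted rather than proved. After unfolding to $R_+^{1/2}$ one is left with integrals of the form $J_j(t,|x|)=\int_1^{+\infty}\tfrac{ds}{\sqrt{s-1}}\,\theta_j(s)\,e^{\frac{i|x|^2}{4t}s}$, and the trivial bound $J_j=O(1)$ yields only $|x|\,|t|^{-\frac{d+1}{2}}$ in place of $|t|^{-\frac d2}$, i.e.\ it loses the unbounded factor $|x|/\sqrt{|t|}$. The heart of the paper's proof is the refined bound $J_j\lesssim \sqrt{|t|}/|x|$, obtained by splitting the integral at $s=1+|t|/|x|^2$ and integrating by parts once on the tail. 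Equivalently, in your variable $u=s^2-|x|^2$, the uniform constant behind your claimed $|t|^{\beta+1}$ requires rescaling $u\mapsto 4tv$ and using that the support has length $\lesssim |x|^2/|t|$ in $v$ together with $|t|/|x|^2\le 1$; this is exactly where the hypothesis $|x|>\sqrt{|t|}$ must enter, and it does not appear in your argument. As it stands, the $|t|^{-\frac d2}$ rate at $k=0$ — the term that saturates the estimate — is not justified.
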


\begin{proof}
Since $s_{t}^{1}$ is bi-$K$-invariant, we have
\begin{align*}
    s_{t}^{1}(x)\,
    &=\,
    \tfrac{C_1}{2}\,|t|^{-\frac{1}{2}}\,
    J(x^{+})^{-\frac{1}{2}}\,
    \sum_{k=0}^{[d/2]}\,4^{-k}\,U_{k}(x^{+})\,
    \overbrace{
    \int_{0}^{+\infty}d(s^{2})\,
    \chi_{1}(\tfrac{s}{|x|})\,
        e^{\frac{i}{4t}s^{2}}\,
        R_{+}^{k-\frac{d-1}{2}}(s^{2}\!-|x|^2)
    }^{I_{k}(t,|x|)}\\[5pt]
    &\qquad+\,
    \tfrac{C_1}{2}\,|t|^{-\frac{1}{2}}\,
    \underbrace{
    \int_{0}^{+\infty}ds\,
    \chi_{1}(\tfrac{s}{|x|})\,
    e^{\frac{i}{4t}s^{2}}\,E(s,x^{+})
    }_{\widetilde{E}(t,|x|)}
\end{align*}
according to \eqref{S3.2 Expansion}.
On the one hand, the remainder estimate
\begin{align}\label{S3.2 Remainder estimate}
    |\widetilde{E}(t,|x|)|\,
    \lesssim\,|x|\,(1+|x|)^{3(\frac{d}{2}+1)}\,
        e^{-\langle\rho,x^{+}\rangle}
\end{align}
follows from \eqref{S3.2 Expansion remainder}.
On the other hand, we claim that
\begin{align}\label{S3.2 Ik estimate}
    |I_{k}(t,|x|)|\,\lesssim\,|t|^{k-\frac{d-1}2}
\end{align}
if $|x|\!>\!\sqrt{|t|\,}\!>\!0$.
Let us first prove \eqref{S3.2 Ik estimate} 
when $d$ is odd. By a change of variables and  by using the fact that 
$R_{+}^{k-\frac{d-1}{2}}(s\!-\!1)
=\bigl(\tfrac{\partial}{\partial{s}}\bigr)^{\!\frac{d-1}{2}-k}R_{+}^{0}(s\!-\!1)$,
we obtain
\begin{align*}
|I_{k}(t,|x|)|\,
&=\,|x|^{2k-d+1}\int_{0}^{+\infty}ds\,\chi_{1}(\sqrt{s})\,e^{\frac{i|x|^{2}}{4t}s}\,R_{+}^{k-\frac{d-1}{2}}(s-1)\\
&=\,|x|^{2k-d+1}\int_{0}^{+\infty}ds\,R_{+}^{0}(s\!-\!1)\,\bigl(-\tfrac\partial{\partial s}\bigr)^{\!\frac{d-1}2-k}\,
\big|\lbrace{\chi_{1}(\sqrt{s})\,e^{\frac{i|x|^{2}}{4t}s}}\bigr\rbrace.
\end{align*}
As $R_{+}^{0}(s\!-\!1)$ is the Dirac measure at $s\!=\!1$,
we conclude that
\begin{align*}
I_k(t,|x|)\,
=\,|x|^{2k-d+1}\,\bigl(-\tfrac i4\tfrac{|x|^2}t\bigr)^{\!\frac{d-1}2-k}\,
=\,\mathrm{O}(|t|^{k-\frac{d-1}2}).
\end{align*}
When $d$ is even, we obtain similarly
\begin{align*}
|I_{k}(t,|x|)|\,
=\,\pi^{-\frac12}\,|x|^{2k-d+1}
\int_1^{+\infty}\tfrac{ds}{\sqrt{s-1}}\,
\bigl(-\tfrac{\partial}{\partial{s}}\bigr)^{\frac{d}{2}-k}\,
\bigl\lbrace{\chi_{1}(\sqrt{s})\,e^{\frac{i|x|^{2}}{4t}s}}\bigr\rbrace,
\end{align*}
which is a linear combination of expressions
\begin{align*}
t^{j+k-\frac{d}{2}}\,|x|^{1-2j}
\underbrace{\int_1^{+\infty}\,\tfrac{ds}{\sqrt{s-1}}\,\theta_j(s)\,e^{\frac{i|x|^2}{4t}s}}_{J_{j}(t,|x|)}
\end{align*}
where $0\le{j}\le\frac{d}{2}-k$ and
$\theta_{j}\in\mathcal{C}_{c}^{\infty}{(\mathbb{R})}$
with $\supp\theta_{j}\subset(-4C_2^2,4C_2^2)$.
Notice that the elementary estimate $J_{j}(t,|x|)=\mathrm{O}(1)$,
together with the assumption $|x|\!>\!\sqrt{|t|\,}$ implies that
$$
|I_{k}(t,|x|)|\lesssim|x|\,|t|^{-\frac d2+k},
$$
which might be enough for our purpose as long as $k\!>\!0$.
The case $k\!=\!0$ requires actually a more careful analysis.
Let us show that 
\begin{align*}
    J_{j}(t,|x|)\,\lesssim\,\tfrac{\sqrt{|t|}}{|x|}
\end{align*}
by splitting up
\begin{align*}
\int_1^{+\infty}ds\,
=\,\int_{1}^{1+\tfrac{|t|}{|x|^{2}}}ds
\,+\,\int_{1+\tfrac{|t|}{|x|^{2}}}^{+\infty}ds
\end{align*}
in the definition of $J_{j}(t,|x|)$.
The contribution of the first integral is easily  estimated by
\begin{align*}
\int_1^{\,1+\tfrac{|t|}{|x|^{2}}}ds\,\tfrac{ds}{\sqrt{s-1}}\,
=\,2\,\sqrt{s\!-\!1}\,\Big|_{s=1}^{s=1+\tfrac{|t|}{|x|^{2}}}
=\,2\,\tfrac{\sqrt{|t|}}{|x|}\,.
\end{align*}
After performing an integration by parts based on
\begin{align*}
e^{\frac{i|x|^{2}}{4t}s}\,
=\,-\,i\,\tfrac{4t}{|x|^2}\,\tfrac{\partial}{\partial{s}}\,e^{\frac{i|x|^{2}}{4t}s},
\end{align*}
the contribution of the second integral is also estimated by
\begin{align*}
\tfrac{|t|}{|x|^2}
\int_{1+\tfrac{|t|}{|x|^2}}^{\,4C_2^2}ds\,\bigl\{(s\!-\!1)^{-\frac12}+(s\!-\!1)^{-\frac32}\bigr\}\,
\lesssim\,\tfrac{\sqrt{|t|}}{|x|}
\end{align*}
under the assumption $|x|\!>\!\sqrt{|t|\,}$.
Thus \eqref{S3.2 Ik estimate} holds as well
when $d$ is even. In conclusion,
\begin{align*}
    |s_{t}^{1}(x)|\,
    &\lesssim\,
        |t|^{-\frac{d}{2}}\,J(x^{+})^{-\frac12}
        \,+\,
        |t|^{-\frac12}\,
        |x|\,(1+|x|)^{3(\frac{d}{2}+1)}\,
        e^{-\langle\rho,x^+\rangle}\\[5pt]
    &\lesssim\,
        |t|^{-\frac{d}{2}}\,
        (1+|x|)^{\frac{3}{2}d+4}
        e^{-\langle\rho,x^+\rangle}
\end{align*}
when $0<|t|<1$ and $x\in\mathbb{X}$ satisfies $|x|>\sqrt{|t|\,}$.
\end{proof}
\vspace{5pt}

In summary, we have divided our kernel analysis into three parts
and deduced \cref{S1.Main Thm Kernel} from \cref{S3.1 Kernel1}, 
\cref{S3.2 Kernel2} and \cref{S3.2 Kernel3}.
Notice that the method used to prove small time kernel 
estimates can be also used for large time.

\section{Dispersive estimates and Strichartz inequalities on symmetric spaces}\label{Section.4 Dispersive Strichartz}
Once pointwise kernel estimates are available on symmetric spaces,
one can deduce dispersive properties for the corresponding propagator, 
by using an interpolation argument based on the Kunze-Stein phenomenon.
The following bi-$K$-invariant version is a straightforward generalization of \cite[Theorem 4.2]{APV2011}.
\begin{lemma}\label{Kunze-Stein}
Let $\kappa$ be a reasonable bi-$K$-invariant function on $G$. Then
\begin{align*}
    \|\cdot*\,\kappa\,\|_{L^{q'}(\mathbb{X})
                    \rightarrow
                    L^{q}(\mathbb{X})}
    \le \Big\lbrace \int_{G} dx\, 
        \varphi_{0}(x)\,
        |\kappa(x)|^{\frac{q}{2}}\, 
        \Big\rbrace^{\frac{2}{q}}
\end{align*}
for every $q\in[2,+\infty)$. In the limit case, we have
\begin{align*}
\textstyle
    \|\cdot*\,\kappa\,\|_{L^{1}(\mathbb{X})
                    \rightarrow
                    L^{\infty}(\mathbb{X})}
    = \sup_{x\in{G}} |\kappa(x)|.
\end{align*}
\end{lemma}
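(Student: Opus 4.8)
The plan is to prove the Kunze–Stein convolution inequality by real interpolation from two endpoint estimates. First I would establish the two extreme cases. The case $q = \infty$ is immediate: for $f \in L^1(\mathbb{X})$ and $x \in G$, $|(f*\kappa)(x)| \le \int_G |f(y)|\,|\kappa(y^{-1}x)|\,dy \le \|f\|_{L^1} \sup_{z\in G} |\kappa(z)|$, and this bound is sharp by testing against an approximate identity, so $\|\cdot * \kappa\|_{L^1 \to L^\infty} = \sup_{x\in G}|\kappa(x)|$. The case $q = 2$ is where the Kunze–Stein phenomenon enters: the $L^2 \to L^2$ operator norm of right convolution by a $K$-bi-invariant $\kappa$ equals $\sup_{\lambda \in \mathfrak{a}} |\mathcal{H}\kappa(\lambda)|$, and since $|\varphi_{-\lambda}(x)| \le \varphi_0(x)$ we get $|\mathcal{H}\kappa(\lambda)| = |\int_G \varphi_{-\lambda}(x)\kappa(x)\,dx| \le \int_G \varphi_0(x)|\kappa(x)|\,dx$, which is exactly the asserted bound with $q = 2$.

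Next I would interpolate. The two endpoints are bounds on $\|\cdot * \kappa\|_{L^{q'} \to L^q}$ for the pairs $(q',q) = (1,\infty)$ and $(2,2)$, with respective constants $\|\kappa\|_{L^\infty}$ and $\int_G \varphi_0 |\kappa|$. Writing $\frac{1}{q} = \frac{1-\theta}{2} + \frac{\theta}{\infty} = \frac{1-\theta}{2}$, so that $\theta = 1 - \frac{2}{q}$, complex interpolation (Riesz–Thorin, applied to the bilinear form or equivalently to the linear convolution operator) gives
\begin{align*}
    \|\cdot * \kappa\|_{L^{q'} \to L^q}
    \le \Big( \int_G dx\, \varphi_0(x)\,|\kappa(x)| \Big)^{1-\theta}
        \big( \sup_{x\in G} |\kappa(x)| \big)^{\theta}.
\end{align*}
To recover the stated form I would then invoke the pointwise bound $\sup_G |\kappa| \le \big(\int_G \varphi_0 |\kappa|^{q/2}\big)^{2/q} \cdot (\dots)$? — more cleanly, the cleanest route is to interpolate not against $\|\kappa\|_{L^\infty}$ but to apply the $q=2$ estimate to the function $|\kappa|^{q/2}$ directly. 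Indeed, one checks that right convolution by $\kappa$ maps $L^{q'} \to L^q$ with norm controlled by the $L^2 \to L^2$ norm of convolution by a suitable power of $|\kappa|$; concretely, $\|\cdot * \kappa\|_{L^{q'}\to L^q} \le \|\cdot * |\kappa|^{q/2}\|_{L^2 \to L^2}^{2/q}$ by interpolating the $(1,\infty)$ and $(2,2)$ endpoints with $|\kappa|$ replaced by its dilates, and then the $q=2$ case just proved yields $\le \big(\int_G \varphi_0 |\kappa|^{q/2}\big)^{2/q}$, which is precisely the claim.

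The main obstacle, and the only nontrivial analytic input, is the $q=2$ endpoint — that is, identifying the $L^2$ operator norm of $K$-bi-invariant right convolution with the sup of its spherical transform and then bounding that sup via $\varphi_0$. This rests on the Plancherel theorem for the spherical Fourier transform (the inversion formula \eqref{S2 Inverse Fourier} and the fact that $\mathcal{H}$ is, up to constants, an isometry from $L^2(K\backslash G/K)$ onto $L^2(\mathfrak{a}, |\mathbf{c}(\lambda)|^{-2}\,d\lambda)^W$), combined with the basic estimate $|\varphi_\lambda| \le \varphi_0$. Everything else — the $L^1 \to L^\infty$ bound and the interpolation — is routine. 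I would also remark that the interpolation step requires only mild regularity of $\kappa$ (e.g. that the relevant integrals converge and $\kappa$ is locally integrable), which is what the word ``reasonable'' in the statement is meant to absorb; one can reduce to nice $\kappa$ by a density argument if desired.
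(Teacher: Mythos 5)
Your two endpoints are correct, but the interpolation step contains the one genuine gap. Note first that the paper gives no proof of this lemma: it is quoted as a straightforward generalization of [APV, Theorem 4.2], and the argument behind that citation is Herz's \emph{principe de majoration} in Cowling's formulation, i.e.\ Stein's interpolation theorem for an \emph{analytic family} of kernels --- precisely the ingredient your proposal waves at. Plain Riesz--Thorin with the fixed kernel $\kappa$, which is what you first write, gives the constant $\big(\int_G\varphi_0|\kappa|\big)^{2/q}\,\big(\sup_G|\kappa|\big)^{1-2/q}$, and this is genuinely weaker than the asserted $\big(\int_G\varphi_0|\kappa|^{q/2}\big)^{2/q}$: from the pointwise bound $|\kappa|^{q/2}\le|\kappa|\,\|\kappa\|_\infty^{q/2-1}$ one sees that the stated constant is always the smaller one, so no H\"older rearrangement can upgrade the Riesz--Thorin bound. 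Your attempted repair --- the inequality $\|\cdot*\kappa\|_{L^{q'}\to L^q}\le\|\cdot*|\kappa|^{q/2}\|_{L^2\to L^2}^{2/q}$, claimed ``by interpolating the $(1,\infty)$ and $(2,2)$ endpoints with $|\kappa|$ replaced by its dilates'' --- is not an argument: since for a nonnegative $K$-bi-invariant kernel the $L^2\to L^2$ norm is exactly $\int_G\varphi_0|\kappa|^{q/2}$, that inequality is simply a restatement of the lemma, and dilations play no role in proving it.

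The missing idea is to interpolate along the analytic family $\kappa_z(x)=\sign(\kappa(x))\,|\kappa(x)|^{\frac q2(1-z)}$ on the strip $0\le\re z\le1$ (normalize $\int_G\varphi_0|\kappa|^{q/2}=1$, say, and set $\kappa_z=0$ where $\kappa=0$). On $\re z=0$ one has $|\kappa_z|=|\kappa|^{q/2}$, so your $q=2$ endpoint (Herz majoration via $|\mathcal H\kappa_z(\lambda)|\le\int_G\varphi_0|\kappa_z|$) gives $\|\cdot*\kappa_z\|_{L^2\to L^2}\le\int_G\varphi_0|\kappa|^{q/2}$ uniformly in $\im z$; on $\re z=1$ one has $|\kappa_z|\le1$, so $\|\cdot*\kappa_z\|_{L^1\to L^\infty}\le1$; and at the real point $z=1-\frac2q$ the family recovers $\kappa$ itself, the interpolated indices being exactly $L^{q'}\to L^q$ with constant $\big(\int_G\varphi_0|\kappa|^{q/2}\big)^{2/q}$. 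With Stein's theorem in place of Riesz--Thorin (and with your $q=2$ and $q=\infty$ computations, which are fine, as the endpoint inputs), the proof is complete and coincides with the one underlying the paper's citation of [APV] and [Cow].
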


\begin{proof}[Proof of \cref{S1.Main Thm Dispersive}]
At the endpoint $q=\widetilde{q}=2$, $t\mapsto\,e^{it\Delta}$
is a one-parameter group of unitary operators on $L^2(\mathbb{X})$:
\begin{align*}
    \|e^{it\Delta}\|_{L^{2}(\mathbb{X})
                    \rightarrow
                    L^{2}(\mathbb{X})}\,
    =\,1.
\end{align*}
For $2<q<\infty$, we deduce from estimates \eqref{S1 kernel estimates} and \eqref{S2 density delta} that
\begin{align*}
    \|s_{t}\|_{L^{q}(\mathbb{X})}\,
    \lesssim\,\omega(t)\,\Big\lbrace{
        \int_{\mathfrak{a}}\,dx^{+}\,
        (1+|x^{+}|)^{qN}\,
        e^{-(q-2)\langle{\rho,x^{+}}\rangle}
        }\Big\rbrace^{\frac{1}{q}}\,\lesssim\,\omega(t)
\end{align*}
where
\begin{align*}
    \omega(t)\,=\,
    \begin{cases}
        |t|^{-\frac{d}{2}} 
        &\textnormal{if \,}0<|t|<1,\\[5pt]
        |t|^{-\frac{D}{2}} 
        &\textnormal{if \,}|t|\ge1.
    \end{cases}
\end{align*}
Moreover, since $s_{t}$ is the bi-$K$-invariant convolution kernel of the propagator $e^{it\Delta}$,
we deduce, by combining the above lemma with \eqref{S1 kernel estimates}, \eqref{S2 density delta}, and \eqref{S2 phi0} that
\begin{align*}
    \|e^{it\Delta}\|_{L^{q'}(\mathbb{X})
                    \rightarrow
                    L^{q}(\mathbb{X})}^{\frac{q}{2}}\,
    &\le\,\int_{\mathfrak{a}}\,dx^{+}\,\delta(x^{+})\,\varphi_{0}(\exp{x^{+}})\,
        |s_{t}(\exp{x^{+}})|^{\frac{q}{2}}\\[5pt]
    &\lesssim\,
        \omega(t)^{\frac{q}{2}}\,
        \underbrace{\Big\lbrace{
        \int_{\mathfrak{a}}\,dx^{+}\,
        (1+|x^{+}|)^{\frac{qN+D-\ell}{2}}\,
        e^{-(\frac{q}{2}-1)\langle{\rho,x^{+}}\rangle}
        }\Big\rbrace}_{=\,\mathrm{O}(1)}.
\end{align*}
We conclude by interpolation between
\begin{align*}
    \begin{cases}
        \|e^{it\Delta}\|_{L^{1}(\mathbb{X})\rightarrow{L}^{q}(\mathbb{X})}\,
        \le\,\|s_{t}\|_{L^{q}(\mathbb{X})}\,\lesssim\,|t|^{-\frac{d}{2}},\\[5pt]
        \|e^{it\Delta}\|_{L^{q'}(\mathbb{X})\rightarrow{L}^{\infty}(\mathbb{X})}\,
        \le\,\|s_{t}\|_{L^{q}(\mathbb{X})}\,\lesssim\,|t|^{-\frac{d}{2}},\\[5pt]
        \|e^{it\Delta}\|_{L^{2}(\mathbb{X})\rightarrow{L}^{2}(\mathbb{X})}\,=\,1,
    \end{cases}
\end{align*}
when $|t|\le1$ and 
\begin{align*}
    \begin{cases}
        \|e^{it\Delta}\|_{L^{1}(\mathbb{X})\rightarrow{L}^{q}(\mathbb{X})}\,
        \le\,\|s_{t}\|_{L^{q}(\mathbb{X})}\,\lesssim\,|t|^{-\frac{D}{2}},\\[5pt]
        \|e^{it\Delta}\|_{L^{q'}(\mathbb{X})\rightarrow{L}^{\infty}(\mathbb{X})}\,
        \le\,\|s_{t}\|_{L^{q}(\mathbb{X})}\,\lesssim\,|t|^{-\frac{D}{2}},\\[5pt]
        \|e^{it\Delta}\|_{L^{q'}(\mathbb{X})\rightarrow{L}^{q}(\mathbb{X})}\,
        \lesssim\,|t|^{-\frac{D}{2}},
    \end{cases}
\end{align*}
when $|t|\ge1$.
\end{proof}

Finally, we establish our Strichartz inequality \eqref{S1 Strichart inequality} by using the $TT^{*}$ argument. Notice that the solution to the Schrödinger equation \eqref{S1 Schrodinger F} is given by the Duhamel formula:
\begin{align*}
    u(t,x)\,
    =\,e^{it\Delta}\,f(x)\,
        -\,i\int_{0}^{t}\,ds\,
        e^{i(t-s)\Delta}\,F(s,x).
\end{align*}
Consider the operator $T$ and its formal adjoint $T^{*}$:
\begin{align*}
    Tf(t,x)\,=\,e^{it\Delta}f(x)
    \qquad\textnormal{and}\qquad
    T^{*}F(x)\,=\,\int_{\mathbb{R}}\,ds\,e^{-is\Delta}\,F(s,x).
\end{align*}
By duality, $T$ is bounded from $L_{x}^{2}$ to $L_{t}^{p}L_{x}^{q}$ if and only if 
$T^{*}$ is bounded from $L_{t}^{p'}L_{x}^{q'}$ to $L_{x}^{2}$. 
Equivalently, the operator
\begin{align}\label{S4 TT Operator}
    TT^{*}F(t,x)\,=\,
    \int_{\mathbb{R}}\,ds\,e^{i(t-s)\Delta}\,F(s,x)
\end{align}
is bounded from $L_{t}^{p'}L_{x}^{q'}$ to $L_{t}^{p}L_{x}^{q}$.
We prove the latter for all $(p,q)$ in the admissible triangle \eqref{S1 admissible X}. The Strichartz inequality \eqref{S1 Strichart inequality} follows from the Christ-Kiselev lemma, see \cite{ChKi2001} or \cite[Section 2.3]{Tao2006}.

\begin{proof}
The endpoint $(\frac{1}{p},\frac{1}{q})=(0,\frac{1}{2})$ is settled by $L^{2}$ conservation and $(\frac{1}{p},\frac{1}{q})=(\frac{1}{2},\frac{1}{2}-\frac{1}{d})$ can be handled by using the method carried out in \cite{KeTa1998}. Let us focus on the non-endpoint cases where $(\frac{1}{2}-\frac{1}{q})\frac{n}{2}\le\frac{1}{p}\le\frac{1}{2}$ and $\frac{1}{2}-\frac{1}{n}<\frac{1}{q}<\frac{1}{2}$.
According to the dispersive estimates \eqref{S1.Main Dispersive properties}, we have
\begin{align*}
    \|TT^{*}F\|_{L^{p}(I;L^{q}(\mathbb{X}))}\,
    &\le\,
    \Big\|\int_{\mathbb{R}}\,ds\,
    \big\|e^{i(t-s)\Delta}\,F(s,\,.\,)\big\|_{L^{q}(\mathbb{X})}\Big\|_{L_{t}^{p}(I)}\\[5pt]
    &\lesssim\,
    \Big\|\int_{|t-s|\le1}ds\,|t-s|^{-(\frac{1}{2}-\frac{1}{q})d}\,
    \big\|F(s,\,.\,)\big\|_{L^{q'}(\mathbb{X})}\Big\|_{L_{t}^{p}(I)}\\[5pt]
    &\quad+\,
    \Big\|\int_{|t-s|\ge1}ds\,|t-s|^{-\frac{D}{2}}\,
    \big\|F(s,\,.\,)\big\|_{L^{q'}(\mathbb{X})}\Big\|_{L_{t}^{p}(I)}.
\end{align*}
On the one hand, the convolution kernel $|t-s|^{-\frac{D}{2}}\mathbbm{1}_{\lbrace|t-s|\ge1\rbrace}$ on $\mathbb{R}$ defines a bounded operator from $L^{p'}$ to $L^{p}$ for all $2\le{p}\le\infty$. On the other hand,  the convolution kernel $|t-s|^{-(\frac{1}{2}-\frac{1}{q})d}\mathbbm{1}_{\lbrace|t-s|\le1\rbrace}$ is bounded from $L^{p'}$ to $L^{p}$ for all $2\le{p}<\infty$ such that $0\le\frac{1}{p'}-\frac{1}{p}\le1-(\frac{1}{2}-\frac{1}{q})d$, or in other words, when $p$ satisfies $(\frac{1}{2}-\frac{1}{q})\frac{n}{2}\le\frac{1}{p}\le\frac{1}{2}$.
\end{proof}

\begin{remark}
We do not take full advantage of our strong large time decay $|t|^{-\frac{D}{2}}$ in the dispersive estimates \eqref{S1.Main Dispersive properties}. Indeed, the convolution kernel $|t-s|^{-1-\varepsilon}\mathbbm{1}_{\lbrace|t-s|\ge1\rbrace}$ defines a bounded operator from $L^{p'}$ to $L^{p}$ for any $\varepsilon>0$ and $2\le{p}\le\infty$.
\end{remark}

\noindent\textbf{Acknowledgments.}
The last author acknowledges financial support from the Methusalem Programme 
    \textit{Analysis and Partial Differential Equations (Grant number 01M01021)} 
    during his postdoc stay at Ghent University.


\vspace{20pt}
\printbibliography
\vspace{20pt}

\address{
\noindent\textsc{Jean-Philippe Anker:}
\href{mailto:anker@univ-orleans.fr}
{anker@univ-orleans.fr}\\
Institut Denis Poisson,
Universit\'e d'Orl\'eans, Universit\'e de Tours \& CNRS,
Orl\'eans, France}
\vspace{10pt}

\address{
\noindent\textsc{Stefano Meda:}
\href{stefano.meda@unimib.it}
{stefano.meda@unimib.it}\\
Dipartimento di Matematica e Applicazioni,
Universit\`a di Milano-Bicocca,
Milano, Italia}
\vspace{10pt}

\address{
\noindent\textsc{Vittoria Pierfelice:}
\href{mailto:vittoria.pierfelice@univ-orleans.fr}
{vittoria.pierfelice@univ-orleans.fr}\\
Institut Denis Poisson,
Universit\'e d'Orl\'eans, Universit\'e de Tours \& CNRS,
Orl\'eans, France}
\vspace{10pt}

\address{
\noindent\textsc{Maria Vallarino:}
\href{mailto: maria.vallarino@polito.it}
{maria.vallarino@polito.it}\\
Dipartimento di Scienze Matematiche Giuseppe Luigi
Lagrange, Dipartimento di Eccellenza 2018-2022,
Politecnico di Torino,
Torino, Italia}
\vspace{10pt}

\address{
\noindent\textsc{Hong-Wei Zhang:}
\href{mailto:hongwei.zhang@ugent.be}
{hongwei.zhang@ugent.be}\\
Institut Denis Poisson,
Universit\'e d'Orl\'eans, Universit\'e de Tours \& CNRS,
Orl\'eans, France\\
Department of Mathematics,
Ghent University,
Ghent, Belgium}
\end{document}